\documentclass[10pt]{amsart}

\usepackage[utf8]{inputenc}
\usepackage{multirow}
\usepackage{longtable}
\usepackage{amsfonts}
\usepackage{float}

\usepackage[backend=bibtex,style=alphabetic,sorting=anyt]{biblatex} 
\addbibresource{mybiblio.bib} 

\usepackage{amssymb,amscd,amsthm, verbatim,amsmath,color,fancyhdr, mathrsfs}
\usepackage{graphicx}
\usepackage{turnstile}
\usepackage{changepage}   


\usepackage{comment}

\makeatletter
\newcommand*{\da@rightarrow}{\mathchar"0\hexnumber@\symAMSa 4B }
\newcommand*{\da@leftarrow}{\mathchar"0\hexnumber@\symAMSa 4C }
\newcommand*{\xdashrightarrow}[2][]{%
  \mathrel{%
    \mathpalette{\da@xarrow{#1}{#2}{}\da@rightarrow{\,}{}}{}%
  }%
}
\newcommand{\xdashleftarrow}[2][]{%
  \mathrel{%
    \mathpalette{\da@xarrow{#1}{#2}\da@leftarrow{}{}{\,}}{}%
  }%
}
\newcommand*{\da@xarrow}[7]{%
  \sbox0{$\ifx#7\scriptstyle\scriptscriptstyle\else\scriptstyle\fi#5#1#6\m@th$}%
  \sbox2{$\ifx#7\scriptstyle\scriptscriptstyle\else\scriptstyle\fi#5#2#6\m@th$}%
  \sbox4{$#7\dabar@\m@th$}%
  \dimen@=\wd0 %
  \ifdim\wd2 >\dimen@
    \dimen@=\wd2 %
  \fi
  \count@=2 %
  \def\da@bars{\dabar@\dabar@}%
  \@whiledim\count@\wd4<\dimen@\do{%
    \advance\count@\@ne
    \expandafter\def\expandafter\da@bars\expandafter{%
      \da@bars
      \dabar@ 
    }%
  }%
  \mathrel{#3}%
  \mathrel{%
    \mathop{\da@bars}\limits
    \ifx\\#1\\%
    \else
      _{\copy0}%
    \fi
    \ifx\\#2\\%
    \else
      ^{\copy2}%
    \fi
  }%
  \mathrel{#4}%
}
\makeatother

\usepackage{tikz-cd} 
\usepackage{enumitem}

\setcounter{section}{0}

\newtheorem{thm}{Theorem}[section]
\newtheorem{prop}[thm]{Proposition}
\newtheorem{lem}[thm]{Lemma}
\newtheorem{cor}[thm]{Corollary}

\newtheorem{conj}[thm]{Conjecture}
\theoremstyle{definition}
\newtheorem{ex}[thm]{Example}
\newtheorem{defn}[thm]{Definition}
\newtheorem{alg}[thm]{Algorithm}
\theoremstyle{remark}
\newtheorem{rem}[thm]{Remark}

\title{Rational Contractions of Fiber Type on $\overline{M}_{0,6}$}

\author{Eric Jovinelly}

\date{\today}

\begin{document}
\maketitle
\addtocontents{toc}{\setcounter{tocdepth}{1}} 

\begin{abstract}
We identify a set of initial rational contractions of fiber type on $\overline{M}_{0,6}$.  
Our proof uses a new algorithm we develop for verifying descriptions of the cone of effective divisors on varieties without elementary rational contractions of fiber type.  On Mori dream spaces, our algorithm identifies faces of 
the cone of nef curves associated to initial rational contractions of fiber type.  
\end{abstract}

\section{Introduction}
Rational contractions of fiber type are an important class of rational maps closely related to the MMP.  These maps have been used to reduce questions about sources of larger dimension to statements about lower-dimensional targets.  For instance, \cite{Castravet_2015} \cite{gonzález2017balanced} \cite{hausen2016blowing} determined the moduli space $\overline{M}_{0,n}$ of stable rational curves with $n \geq 10$ markings is not a Mori dream space (MDS) by identifying rational contractions of fiber type $\overline{M}_{0,n} \dashrightarrow S$ to non-MDS surfaces $S$.  

$\overline{M}_{0,6}$ is a Mori dream space \cite{castravet2008cox}.  Rational contractions of fiber type on $\overline{M}_{0,6}$ therefore have MDS-codomain.  We show each factors rationally through one of the following \textit{initial} rational contractions of fiber type (see Definition \ref{def initial}).  These maps 
are described below in terms of the Kapranov basis on $\text{Pic}(\overline{M}_{0,6})$ given in Sec.\ 2.




\begin{thm}\label{rational contractions}
Every rational contraction of fiber type of $\overline{M}_{0,6}$ factors through one of the three following $S_6$-orbits of initial rational contractions of fiber type:
\begin{enumerate}
    \item A forgetful morphism $\pi : \overline{M}_{0,6} \rightarrow \overline{M}_{0,5}$,
    \item A rational map $\pi : \overline{M}_{0,6} \dashrightarrow Y$ factoring a product of iterated forgetful morphisms $\phi = \pi_{ij} \times \pi_{mn}: \overline{M}_{0,6}\rightarrow \overline{M}_{0,4} \times \overline{M}_{0,4}$ forgetting distinct markings,
    \item A rational map $\pi : \overline{M}_{0,6} \dashrightarrow Y$ factoring the rational map $\phi: \overline{M}_{0,6}\dashrightarrow \mathbb{P}^2$ induced by the moving part of either $|4H - 3E_i - \sum_{j,k\neq i} E_{jk}|$ or $|3H - E_{im} -E_{jm} -E_{km} -E_{in} -E_{jn} -E_{kn}|$, where $\{i,j,k,m,n\}=\{1,2,3,4,5\}$. 
\end{enumerate}
Here, $Y$ is isomorphic to a Keel-Vermiere divisor, realized in
both cases as the blow-up of $\phi$'s target along images of boundary divisors contracted to points by $\phi$.
\end{thm}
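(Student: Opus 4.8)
The plan is to exploit the Mori dream space structure of $\overline{M}_{0,6}$ \cite{castravet2008cox}, which turns the classification of rational contractions of fiber type into a finite problem about the chamber decomposition of $\overline{\mathrm{Eff}}(\overline{M}_{0,6}) \subseteq N^1(\overline{M}_{0,6})_{\mathbb{R}}$. On a Mori dream space, a rational contraction of fiber type $f \colon X \dashrightarrow Y$ is determined, up to isomorphism of $Y$, by the chamber $f^{*}\mathrm{Nef}(Y)$; this is a cone of movable classes $P$ with Iitaka dimension $\kappa(P) = \dim Y < \dim X = 3$, lying in a proper face of $\overline{\mathrm{Eff}}(\overline{M}_{0,6})$ (equivalently, by duality, in a face of the cone of nef curves of the sort produced by our algorithm), and $f$ factors through $f'$ exactly when $f^{*}\mathrm{Nef}(Y) \subseteq (f')^{*}\mathrm{Nef}(Y')$. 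Since there are only finitely many such chambers, every one is contained in a maximal one; so to prove the theorem it suffices to (a) certify the extremal rays of $\overline{\mathrm{Eff}}(\overline{M}_{0,6})$, (b) enumerate the fiber-type chambers and isolate the maximal --- i.e.\ \emph{initial} --- ones, and (c) identify the codomain of each.

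For steps (a) and (b) I would feed the candidate generating set of $\overline{\mathrm{Eff}}(\overline{M}_{0,6})$, namely the $25$ boundary divisors together with the $S_6$-orbit of Keel-Vermeire divisors, to the algorithm developed in this paper. Its certification half is precisely what excludes a spurious or missing extremal ray: it verifies, face by face, that the proposed cone ``closes up'' against the effective cones of the lower-dimensional targets of the elementary rational contractions of fiber type supported on its boundary. Those targets are surfaces, whose effective cones the algorithm certifies in turn; the recursion has depth at most two on $\overline{M}_{0,6}$, since below the surfaces one finds only curves, which admit no contractions of fiber type. Once the cone is certified, the second half of the algorithm reads off the faces of the nef-curve cone coming from initial rational contractions of fiber type. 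Carrying this out, one finds three $S_6$-orbits of maximal fiber-type chambers: the pullback under a forgetful morphism $\pi \colon \overline{M}_{0,6} \to \overline{M}_{0,5}$ of $\overline{\mathrm{Eff}}(\overline{M}_{0,5})$, and the two chambers generated in the Kapranov basis by the movable parts of $|4H - 3E_i - \sum_{j,k \neq i} E_{jk}|$ and of $|3H - E_{im} - E_{jm} - E_{km} - E_{in} - E_{jn} - E_{kn}|$. Along the way one checks that these last two linear systems are exchanged by the (non-manifest) $S_6$-action, so that they contribute a single orbit, and that the product of forgetful morphisms $\phi = \pi_{ij} \times \pi_{mn} \colon \overline{M}_{0,6} \to \overline{M}_{0,4} \times \overline{M}_{0,4} = \mathbb{P}^1 \times \mathbb{P}^1$ falls into the second orbit.

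For step (c) I would compute, for a movable class $P$ in the relative interior of each maximal chamber, the codomain $Y = \mathrm{Proj}\,R(\overline{M}_{0,6}, P)$. Case (1) is immediate. In cases (2) and (3) the relevant map $\phi$ --- the product of forgetful maps to $\mathbb{P}^1 \times \mathbb{P}^1$, respectively the map to $\mathbb{P}^2$ cut out by the moving part of the Keel-Vermeire system --- is given by a class on the \emph{boundary} of the maximal chamber: $\phi$ contracts a collection of boundary divisors to points, and passing into the interior of the chamber resolves precisely those contractions. One then verifies that $Y$ is the blow-up of the target of $\phi$ at the images of exactly those contracted boundary divisors, and recognises the resulting surface as a Keel-Vermeire divisor --- most cleanly by comparing Picard lattices together with their intersection forms (noting in particular that the blow-up of $\mathbb{P}^1 \times \mathbb{P}^1$ arising in case (2) is isomorphic to the blow-up of $\mathbb{P}^2$ arising in case (3)), or by exhibiting a Keel-Vermeire surface inside $\overline{M}_{0,6}$ as a member of $|P|$. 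Finally, an arbitrary rational contraction of fiber type factors through whichever of the three initial ones whose chamber contains its own; in particular the fiber-type contractions to curves arise by post-composing case (1) with a conic bundle structure on $\overline{M}_{0,5}$, case (2) with a ruling of $\mathbb{P}^1 \times \mathbb{P}^1$, or case (3) with a pencil of lines on $\mathbb{P}^2$.

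The main obstacle is the completeness of steps (a) and (b). A priori some face of $\overline{\mathrm{Eff}}(\overline{M}_{0,6})$ spanned by a mixture of boundary and Keel-Vermeire classes might support a movable, non-big class and so produce a fiber-type contraction outside the three orbits --- dually, there might be a nef curve class whose contracted family we have missed. Excluding this is exactly what the new algorithm is built to do, and the subtle points are controlling its recursion onto the surface targets and certifying their effective cones; by contrast, once the chambers are in hand, determining the codomains in step (c) is comparatively routine, though pinning down the precise isomorphism type of $Y$ still requires a direct lattice computation.
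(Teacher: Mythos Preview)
Your overall strategy is the paper's: reduce to a finite problem via the MDS structure, run the paper's algorithm to certify $\overline{\text{Eff}}(\overline{M}_{0,6})$ and isolate the three $S_6$-orbits of initial contractions, then identify each target. You phrase the middle step in terms of maximal movable chambers in $N^1$; the paper works dually with extreme nef curves in $N_1$ via Proposition~\ref{rational contractions cor} and Corollary~\ref{cor 2}. These viewpoints are equivalent, so at the architectural level you and the paper agree.

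One correction and one omission. The algorithm does \emph{not} operate by recursing to the effective cones of surface targets as you describe; there is no ``depth-two recursion.'' It works entirely on $\overline{M}_{0,6}$, with a fixed list $\mathcal{C}$ of negative curves on the boundary divisors, and its content is the combinatorial enumeration of \emph{nef-minimal subsets} of $\mathcal{C}$ (Definition~\ref{def nef minimal}, Lemmas~\ref{propertiesNefMinimal} and~\ref{eliminate}), showing that all of them are covered by the three $S_6$-orbits of nef curves $l-e_1$, $l-e_{12}-e_{34}$, and $2l-e_{12}-e_{13}-e_{14}-e_{25}-e_{35}-e_{45}$. That case analysis (Lemma~\ref{lastLem}, proved in Section~\ref{lastlemSection}) is where essentially all the work lies, and your proposal asserts its conclusion without indicating how one would establish it. For step~(c), the paper does not compute $\text{Proj}\,R(\overline{M}_{0,6},P)$ or compare Picard lattices: it starts from the coextremal-ray contractions $\phi$ already listed in \cite[Section~5.2]{Hassett_2002} and applies Casagrande's factorization (Proposition~\ref{special contraction}, Section~\ref{identifying initial contractions}) --- Attiyah-flopping the rigid curves in the special fibers --- to resolve each $\phi$ into a \emph{special} contraction whose target is the blow-up of $\phi$'s target at the images of the contracted boundary divisors. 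Your section-ring route would also reach $Y$, but it is not the paper's method and would require a separate argument to recognise the result as a Keel-Vermiere surface.
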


Our proof of Theorem \ref{rational contractions} does not use Castravet's description of the Cox ring $\text{Cox}(\overline{M}_{0,6})$ of $\overline{M}_{0,6}$ \cite{castravet2008cox}.  Incorporating her results leads to the following characterization of generators of $\text{Cox}(\overline{M}_{0,n})$ for $n = 5,6$.  



\begin{cor}\label{cox ring}
Suppose $n = 5,6$. A Cartier divisor $E$ is a generator of $\text{Cox}(\overline{M}_{0,n})$ iff $E\subset \overline{M}_{0,n}$ is a rigid section of a rational contraction of relative dimension one.

\end{cor}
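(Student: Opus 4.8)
The plan is to combine Theorem~\ref{rational contractions} with Castravet's presentation of $\text{Cox}(\overline{M}_{0,6})$~\cite{castravet2008cox}, treating the two implications separately.

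\textbf{The implication ``rigid section $\Rightarrow$ generator'' is formal.} Since $\overline{M}_{0,n}$ is a Mori dream space for $n\le 6$, $\text{Cox}(\overline{M}_{0,n})$ is a finitely generated integral domain (it is a domain because $\overline{M}_{0,n}$ is irreducible). I would first record the following lemma, valid on any variety $X$ with finitely generated Cox ring that is a domain: if $D$ is a prime divisor with $h^0(\mathcal{O}_X(D))=1$, then $D$ is a generator of $\text{Cox}(X)$. To prove it, fix homogeneous generators $s_1,\dots,s_r$ of $\text{Cox}(X)$ and write the defining section $s_D$ of $D$ as a polynomial in them; every monomial that occurs is homogeneous of degree $[D]$, hence a scalar multiple of $s_D$ because $h^0(\mathcal{O}_X(D))=1$, and it is nonzero because $\text{Cox}(X)$ is a domain and each $s_i\neq 0$. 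Thus $s_D$ equals, up to scalar, a single monomial $\prod s_i^{a_i}$, and reading this as an equality of effective divisors gives $D=\sum a_i D_i$ with $D_i=\text{div}(s_i)$; primeness and reducedness of $D$ force exactly one $a_{i_0}=1$ with $D=D_{i_0}$, so $s_D$ is (a scalar multiple of) a generator. Applying this to a rigid section $E$ of a rational contraction of relative dimension one---which is in particular a prime divisor with $h^0(\mathcal{O}_{\overline{M}_{0,n}}(E))=1$---shows $E$ is a generator of $\text{Cox}(\overline{M}_{0,n})$. Note this direction does not use Theorem~\ref{rational contractions}.

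\textbf{For the converse I would go through Castravet's list of generators and exhibit the required contraction for each.} For $n=5$, $\overline{M}_{0,5}$ is the degree-$5$ del Pezzo surface and its Cox generators are the ten boundary $(-1)$-curves; for $n=6$, by~\cite{castravet2008cox} the generators are the boundary divisors $\delta_{ij}$, $\delta_{ijk}$ and the fifteen Keel--Vermeire divisors. All of these are rigid, so it remains to present each as a \emph{section} of a rational contraction of relative dimension one, and Theorem~\ref{rational contractions} conveniently supplies the candidates: its three families all have two-dimensional target, hence relative dimension one. The boundary cases are then combinatorics of stable curves: $\delta_{ij}$ maps isomorphically onto $\overline{M}_{0,5}$ under the forgetful morphism $\pi_i$ (forgetting $i$ destabilizes and contracts the two-pointed bubble), so it is a section of a type~(1) map---likewise for the boundary curves of $\overline{M}_{0,5}$ under the forgetful morphisms to $\overline{M}_{0,4}$; and $\delta_{ijk}$ maps isomorphically onto $\overline{M}_{0,4}\times\overline{M}_{0,4}$ under a product $\pi_{ab}\times\pi_{cd}$ of iterated forgetful morphisms chosen so that exactly one marking survives, with its bubble, from each of $S=\{i,j,k\}$ and its complement, so after lifting along the blow-up $Y\to\overline{M}_{0,4}\times\overline{M}_{0,4}$ of Theorem~\ref{rational contractions}(2) it is a section of a type~(2) contraction.

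\textbf{The main obstacle is the Keel--Vermeire case.} Here Theorem~\ref{rational contractions}(3) already identifies the target $Y$ of the associated contraction $\pi\colon\overline{M}_{0,6}\dashrightarrow Y$ with a Keel--Vermeire divisor, realized as a blow-up of $\mathbb{P}^2$; what remains is to check that the divisor $KV\subset\overline{M}_{0,6}$ itself is a section of $\pi$, i.e.\ that $\pi|_{KV}\colon KV\dashrightarrow Y$ is birational, equivalently $KV\cdot F=1$ for a general fiber curve $F$ of $\pi$. I would do this by tracking $\pi$ through the blow-ups and blow-downs used to resolve the $\mathbb{P}^2$-fibration $\phi$ in the proof of Theorem~\ref{rational contractions}: one must verify that $KV$ is not contained in the exceptional loci, so that its strict transform is a genuine section, and compute its intersection with a general fiber directly from the class $4H-3E_i-\sum_{j,k\neq i}E_{jk}$ (respectively $3H-E_{im}-E_{jm}-E_{km}-E_{in}-E_{jn}-E_{kn}$) defining $\phi$. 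This intersection-theoretic bookkeeping on $\overline{M}_{0,6}$, rather than anything conceptual, is where the real work lies; the remaining cases and the ``$\Rightarrow$'' direction are, as indicated above, either routine combinatorics or formal.
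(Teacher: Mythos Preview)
Your overall plan---prove the forward direction by the standard ``rigid prime divisor $\Rightarrow$ Cox generator'' lemma, then run through Castravet's list and exhibit each generator as a section of one of the contractions in Theorem~\ref{rational contractions}---is exactly what the paper has in mind (the paper states the corollary without a detailed proof, deferring to Theorem~\ref{rational contractions} together with~\cite{castravet2008cox}). Your treatment of the boundary divisors via forgetful maps and products of forgetful maps is fine.

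There is, however, a genuine error in the Keel--Vermeire case. A quick intersection check shows that for the type~(3) fiber class $F_3=2l-e_{12}-e_{13}-e_{14}-e_{25}-e_{35}-e_{45}$ one always has $KV_{ab,cd}\cdot F_3\in\{0,2\}$: writing the four $E_{xy}$ in $KV_{ab,cd}$ as the edges of a $K_{2,2}$ on $\{a,b\}\sqcup\{c,d\}$ and the six $e_{xy}$ in $F_3$ as the edges of a $K_{2,3}$ on $\{1,5\}\sqcup\{2,3,4\}$, the overlap is always even (it is $4$ exactly when $\{a,b\}=\{1,5\}$, and $2$ otherwise). Hence no Keel--Vermeire divisor is a section of any type~(3) contraction, and the bookkeeping you propose cannot succeed.

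The fix is to use type~(2) instead, which makes the verification easier rather than harder. For the fiber class $F_2=l-e_{12}-e_{34}$ one computes, e.g.,
\[
KV_{13,25}\cdot(l-e_{12}-e_{34})=2-1-0=1,
\]
so $KV_{13,25}$ meets a general fiber of $\pi_{12}\times\pi_{34}$ transversally in one point and is therefore a (rational) section of the corresponding initial contraction $\overline{M}_{0,6}\dashrightarrow Y$. By $S_6$-symmetry every Keel--Vermeire divisor is a rigid section of some type~(2) contraction. Note that both the type~(2) and type~(3) targets are abstractly isomorphic to a Keel--Vermeire surface, so the coincidence ``$Y\cong KV$'' in Theorem~\ref{rational contractions}(3) is a red herring for this argument.
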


Rational contractions of fiber type $\pi : X \dashrightarrow Y$ on a MDS are 
usually described by the subset $\pi^*\text{Nef}(Y)\subset \text{Mov}(X)$ of moving divisors.  We instead associate \textit{initial} rational contractions of fiber type to certain faces of the nef cone $\text{Nef}_1(X)$ of curves in Prop.\ \ref{rational contractions cor}.  General fibers of initial rational contractions are swept out by curves on any extreme ray of the associated face.  The process of identifying these faces (rays in Lemma \ref{lastLem}) provides the first computer-free proof of the following result.

\begin{thm}[\cite{Hassett_2002}]\label{coneThm}
Let $n = 5,6$ and $\mathcal{B}$ be the set of boundary divisors on $\overline{M}_{0,n}$.  Suppose $D\in N^1(\overline{M}_{0,n})$ satisfies $D|_B \in \text{Eff}(B)$ for all $B\in\mathcal{B}$.  Then $D\in\text{Eff}(\overline{M}_{0,n})$. 
\end{thm}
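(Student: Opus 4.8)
The plan is to dualize: rather than producing an effective representative for $D$ directly, I would show that $D$ pairs non-negatively with every nef curve class on $\overline{M}_{0,n}$, and deduce effectivity from the Mori dream space property. Concretely, for $n=5,6$ the space $X=\overline{M}_{0,n}$ is a Mori dream space (\cite{castravet2008cox} for $n=6$; classical for $n=5$), so $\text{Eff}(X)$ is closed and rational polyhedral and equals $\text{Nef}_1(X)^\vee$; hence $D\in\text{Eff}(X)$ if and only if $D\cdot\gamma\ge 0$ for every $\gamma$ spanning an extremal ray of $\text{Nef}_1(X)$. On the other hand, if $i_B\colon B\hookrightarrow X$ is a boundary divisor, then the projection formula gives $D\cdot_X (i_B)_*\gamma' = (D|_B)\cdot_B\gamma'$, so the hypothesis ``$D|_B\in\text{Eff}(B)$ for every $B\in\mathcal B$'' says exactly that $D$ pairs non-negatively with every class $(i_B)_*\gamma'$, $\gamma'\in\text{Nef}_1(B)$ (each such $B$ is again a small moduli space or a product of such, so $\text{Eff}(B)=\text{Nef}_1(B)^\vee$ is available). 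Therefore it suffices to prove
\[
\text{Nef}_1(\overline{M}_{0,n})\ \subseteq\ \overline{\operatorname{Cone}}\Bigl(\ \bigcup_{B\in\mathcal B}(i_B)_*\,\text{Nef}_1(B)\ \Bigr),
\]
i.e.\ that every nef curve class on $\overline{M}_{0,n}$ is a non-negative combination of push-forwards of nef curve classes from the boundary divisors. I would argue by induction on $n$: for $n\le 4$ there is nothing to prove, and for $n=6$ the boundary divisors are $\cong\mathbb{P}^1\times\mathbb{P}^1$ or $\cong\overline{M}_{0,5}$, whose cones of nef curves are thus known.

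To establish the displayed inclusion I would enumerate the extremal rays of $\text{Nef}_1(\overline{M}_{0,n})$ using the machinery of the paper. By Prop.\ \ref{rational contractions cor}, every initial rational contraction of fiber type (Definition \ref{def initial}) determines a face of $\text{Nef}_1(\overline{M}_{0,n})$ whose extremal rays are spanned by curves sweeping out its general fibers; Theorem \ref{rational contractions} lists these contractions up to $S_6$. The verification algorithm, whose conclusion is recorded in Lemma \ref{lastLem}, then asserts that the extremal rays of $\text{Nef}_1(\overline{M}_{0,n})$ are \emph{exhausted} by (i) the rays lying on one of those ``fiber-type'' faces, and (ii) the rays spanned by a curve that is contained in some boundary divisor $B$ and is nef on $B$. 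Rays of type (ii) lie in the right-hand side of the displayed inclusion by construction. For a ray of type (i), spanned by the class $\gamma$ of a general fiber of one of the three contractions of Theorem \ref{rational contractions}, I would write $\gamma$ as a sum of boundary-supported nef curves by degenerating the fiber: e.g.\ for the forgetful morphism $\overline{M}_{0,6}\to\overline{M}_{0,5}$, specialize the base point to a boundary point so that the fiber breaks into a ruling of a divisor $\delta_{0,S}\cong\mathbb{P}^1\times\mathbb{P}^1$ together with a conic-bundle fiber of a divisor $\delta_{0,T}\cong\overline{M}_{0,5}$, and check that each component is nef on its boundary divisor; analogous degenerations handle the contractions in parts (2) and (3). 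Summing, $\gamma$ again lies in the right-hand side, which closes the induction and proves the theorem.

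I expect the main obstacle to be the completeness half of the enumeration: showing that $\text{Nef}_1(\overline{M}_{0,n})$ has \emph{no} extremal rays beyond those visible from the fiber-type contractions of Theorem \ref{rational contractions} and from the boundary divisors. This is precisely the step where one classically resorts to a machine computation of the relevant polyhedral cones, and it is genuinely delicate for $n=6$: the Keel--Vermeire divisors appearing in Theorem \ref{rational contractions}(3) show that $\text{Eff}(\overline{M}_{0,6})$ strictly contains the cone spanned by boundary divisors, so the dictionary between facets of $\text{Eff}$ and extremal nef curves is substantially richer than in the del Pezzo case $n=5$. Here completeness is instead forced structurally: the algorithm shows that any hypothetical additional extremal ray of $\text{Nef}_1(\overline{M}_{0,n})$ would yield an elementary rational contraction of fiber type outside the $S_6$-classification of Theorem \ref{rational contractions}, which is impossible. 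A secondary, more computational point is organizing the degenerations used for type (i) and confirming — via the inductive description of $\text{Nef}_1$ on the $\mathbb{P}^1\times\mathbb{P}^1$ and $\overline{M}_{0,5}$ boundary divisors — that each component of the degenerate fiber is genuinely nef on the boundary divisor containing it, rather than merely effective.
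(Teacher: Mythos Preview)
Your dualization is sound in principle---the theorem is equivalent to the inclusion $\text{Nef}_1(\overline{M}_{0,n})\subseteq\text{Cone}(\mathcal{C})$, where $\mathcal{C}$ is the set of pushforwards of nef curves from the boundary divisors (these are precisely the negative curves the paper tabulates). But the execution contains a circularity and a misreading. Invoking Theorem~\ref{rational contractions} to control the extremal rays of $\text{Nef}_1(X)$ is circular: in the paper, Theorem~\ref{rational contractions} is derived \emph{from} Theorem~\ref{coneThm} via Corollary~\ref{cor 2}, whose hypothesis $\mathfrak{M}\subseteq\text{Eff}(X)$ is exactly the content of Theorem~\ref{coneThm}. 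Separately, Lemma~\ref{lastLem} does not claim that the three listed classes, together with boundary-supported curves, exhaust the extremal rays of $\text{Nef}_1(X)$; it says only that those three nef curves \emph{cover} (Definition~\ref{def cover}) every nef-minimal subset of $\mathcal{C}$---a statement about the face lattice of the auxiliary cone $\mathfrak{M}$, not about $\text{Nef}_1(X)$. Your type-(ii) rays are also ill-posed: the curves in $\mathcal{C}$ pair negatively with the boundary divisor they sweep out, so $(i_B)_*\gamma'$ does not lie in $\text{Nef}_1(X)$ and cannot span an extremal ray there.

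The paper avoids $\text{Nef}_1(X)$ altogether. It observes that ``$D|_B\in\text{Eff}(B)$ for all $B$'' is literally ``$D\in\mathfrak{M}$,'' and proves $\mathfrak{M}\subseteq\mathfrak{E}$ (the cone on boundary and Keel--Vermiere divisors) directly. Proposition~\ref{nefCovers} reduces this to checking $\mathbf{F}(c)\subseteq\mathfrak{E}$ for the three covering curves $c$ of Lemma~\ref{lastLem}; Cases~1--3 then verify each inclusion by hand, repeatedly subtracting explicit elements of $\mathfrak{E}$ from an arbitrary $D\in\mathbf{F}(c)$ until the remainder is visibly in $\mathfrak{E}$. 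No MDS input and no enumeration of $\text{Nef}_1(X)$ is required---whereas your route needs both, and obtains the second only through the theorem you are trying to prove.
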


Like \cite{castravet2008cox}, our proof of Theorem \ref{coneThm} demonstrates by hand that $\text{Eff}(\overline{M}_{0,6})$ is generated by Keel-Vermiere and boundary divisors.  Translating this description of $\text{Eff}(\overline{M}_{0,6})$ to the statement of Theorem \ref{coneThm} previously required computer computation.  We develop an alternative algorithm in Section 4, Algorithm \ref{negCurvesAlg}, that allows for a computer-free analysis.  This algorithm leverages combinatorial patterns underlying the relationship between $\text{Mov}(X)$ and $\text{Eff}(X)$ in varieties $X$ with many rigid extremal divisors swept out by families of \textit{negative curves} (see Definition \ref{def negative curve}).  

When used to verify a description of $\text{Eff}(X)$ for some MDS $X$, Algorithm \ref{negCurvesAlg} identifies the faces of $\text{Nef}_1(X)$ associated to initial rational contractions of fiber type on $X$.  To recover the associated initial contraction, we apply results about \textit{special} rational contractions \cite{CasagrandeRational} (see Definition \ref{def special}) to rational contractions of fiber-type obtained from a MMP with scaling \cite{araujocone} \cite{lehmanncone} \cite{CasagrandeCone}.

Previous work \cite[Table~9]{M07arxivpaper} shows Theorem \ref{coneThm} is false for $n=7$.  This highlights a stark difference between $\text{Eff}(\overline{M}_{0,6})$ and $\text{Eff}(\overline{M}_{0,7})$: as any nonboundary extreme divisor is effective upon restriction to each boundary divisor, the former is as large as possible given natural inductive constraints, while the latter is not.  We conjecture this difference reflects a fundamental change to the rational contractions of fiber type on $\overline{M}_{0,n}$.  Data in \cite[Section~9]{M07arxivpaper} may be used to study such a difference using generalizations of methods in Section \ref{Section MDS}. 

\textbf{Outline:} Section \ref{Section2} defines \textit{negative curves} and reviews Kapranov's construction of $\overline{M}_{0,n}$ as an iterated blow-up of $\mathbb{P}^{n-3}$.  In Section \ref{Section MDS}, we define \textit{initial} rational contractions and describe their relation to nef curves.  Section \ref{NegTech} describes general techniques for working with cones defined by negative curve classes, which are applied in the proofs of Theorems \ref{coneThm} and \ref{rational contractions} in Sections \ref{Section10} and \ref{Section9}.

\textbf{Acknowledgments:}  The author thanks Eric Riedl, Brian Lehmann, Izzet Coskun, and Ana-Maria Castravet, and David Jensen for helpful feedback.



\tableofcontents

\section{Background and Notation}\label{Section2}
\subsection{Background}  Let $X$ be a variety and $\text{Eff}(X)$ be the full-dimensional, pointed cone of effective divisors inside $N^1(X)$, with closure $\overline{\text{Eff}}(X)$.  Its dual, $\text{Nef}_1(X)\subset N_1(X)$ is the cone of nef (free) curves on $X$ \cite{bdpp}.  
Extreme rays of $\overline{\text{Eff}}(X)$ often contain the class of a rigid, irreducible divisor $D$ swept out by a family of integral curves of class $c$ satisfying $c.D < 0$.  Such a class $c$ provides a hyperplane in $N^1(X)$ that separates $D$ from every other extreme ray of $\overline{\text{Eff}}(X)$.  We call such curves \textit{negative curves} (on/sweeping out $D$).

\begin{defn}\label{def negative curve}
Let $D\subset X$ be a Cartier divisor.  A curve class $c\in N_1(X)$ is called a \textit{negative curve} (on/sweeping out $D$) if $c.D < 0$ and there exists a family of integral curves, numerically equivalent to $c$, whose image dominates $D$.  
\end{defn}

Negative curves often describe facets of the movable cone $\text{Mov}(X)$ of divisors \cite{lehmann_movable} \cite{Choi}.  This extends classical results proving duality between the closed cone $\overline{\text{Nef}}(X)$ of nef divisors and the Mori cone $\overline{\text{NE}}(X)$ of effective curves. We have
$$\text{Nef}(X) \subseteq \text{Mov}(X) \subseteq \text{Eff}(X).$$
When $X$ is a Mori dream space, all three of these are closed and polyhedral.



\subsection{Notation}\label{Notation}  Throughout our paper, $\mathcal{D}$ will be a finite set of extreme divisors on a variety $X$ and $\mathcal{C}$ will be a finite collection of negative curves sweeping out divisors in $\mathcal{D}$.  We let $\mathfrak{E}\subset N^1(X)$ denote the cone generated $\mathcal{D}$, and define the cone 
$$\mathfrak{M}=\{ D\in N^1(X) | D.c \geq 0 \text{ for all } c\in \mathcal{C}\}.$$



We focus primarily on $X=\overline{M}_{0,6}$.  Kapranov's construction realizes $\overline{M}_{0,6}$ as the iterated blow up of $\mathbb{P}^3$ along 5 linearly general points $p_i$, $1\leq i \leq 5$, and the strict transforms of the 10 lines $l_{ij} = \overline{p_i p_j}$ they span.  We let $H$ denote the pullback of the hyperplane class from $\mathbb{P}^3$ under the iterated blow-up $\overline{M}_{0,6}\rightarrow \mathbb{P}^3$, $E_i$ denote the exceptional divisor over $p_i$, and $E_{ij}$ denote the exceptional divisor over the strict transform of $l_{ij}$.  These comprise a basis for $N^1(\overline{M}_{0,6})\cong\text{Pic}(\overline{M}_{0,6}) \otimes_\mathbb{Z} \mathbb{R}$.  

A corresponding basis for $N_1(\overline{M}_{0,6})$ consists of $l, e_i, \text{ and } e_{ij}$, where $l$ is the strict transform of a general line under $\overline{M}_{0,6}\rightarrow \mathbb{P}^3$, $e_i$ is the strict transform of a general line under the blow-up $E_i\rightarrow \mathbb{P}^2$, and $e_{ij}$ is the fiber over a general point in the restriction of Kapranov's map to $E_{ij}\rightarrow \mathbb{P}^1\subset\mathbb{P}^3$.  Note $l.H =1$, $e_i .E_i = -1$, and $e_{ij} .E_{ij} = -1$.  All other pairing of basis elements are $0$.

\subsection{Mori Dream Spaces} Properties of Mori dream spaces (MDS's), defined below, are well documented.  We review those related to rational contractions.

\begin{defn}[\cite{MDSpaper}]
A Mori dream space (MDS) is a normal, projective $\mathbb{Q}$-factorial variety $X$ such that
\begin{enumerate}
    \item $\text{Pic}(X)$ finitely generated and $\text{Pic}(X)_\mathbb{Q} \cong N_1(X)_\mathbb{Q}$;
    \item $\text{Nef}(X)$ is generated by finitely many semi-ample line bundles;
    \item There are finitely many small $\mathbb{Q}$-factorial modifications (SQMs) $f_i : X \dashrightarrow X_i$ such that each $X_i$ satisfies (1) and (2), and $\text{Mov}(X) = \cup_i f_i^*(\text{Nef}(X_i))$.
\end{enumerate}
\end{defn}
\noindent Rational contractions of Mori dream spaces admit the following description \cite{MDSpaper}.

\begin{defn}
A rational contraction $\pi : X \dashrightarrow Y$ of a MDS is the composition of a SQM $f : X \dashrightarrow \tilde{X}$ and a contraction $\tilde{\pi} : \tilde{X} \twoheadrightarrow Y$ (map with connnected fibers).
\end{defn}

Rational contractions of a MDS $X$ are in bijection with the faces of a certain decomposition of $\text{Mov}(X)$ into a fan.  
Moreover, the MMP can be carried out for every divisor on $X$ \cite[Proposition~1.11(1)]{MDSpaper}, and every rational contraction can be factored as a finite sequence of flips followed by small, divisorial, and fiber type elementary contractions.  If $f: X \dashrightarrow Y$ is a rational contraction and $Y$ is $\mathbb{Q}$-factorial, then $Y$ is a MDS.  In other words, when the above factorization contains no small contractions, the target is also a MDS.  

A rational contraction $\pi:X\dashrightarrow Y$ is of fiber type iff $\pi^*\text{Nef}(Y)$ is contained in the boundary of $\text{Eff}(X)$.  By factoring through a SQM, we may define $\pi_* : N_1(X)\rightarrow N_1(Y)$ as the dual of $\pi^*$.  It follows that $\pi$ is of fiber type iff $\ker \pi_*$ intersects $\text{Nef}_1(X)$.  For our purposes, we highlight one property about rational contractions. 


\begin{lem}
Let $X$ be a MDS and $v\in \text{Nef}_1(X)$ be an extreme ray.  Suppose there exist negative curves $c_i$ and an expression $v = \sum a_i c_i$ with $a_i > 0$.  For any rational contraction $\pi : X \dashrightarrow Y$, if $\pi_* v = 0$, then $\pi_* c_i = 0$ for all $i$.
\end{lem}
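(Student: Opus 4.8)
The plan is to pair the relation $\pi_* v = 0$ against pullbacks of ample classes from $Y$ and to exploit that such pullbacks are movable, hence nonnegative on every negative curve. Write $\pi = \tilde\pi\circ f$ with $f : X\dashrightarrow\tilde X$ a SQM and $\tilde\pi:\tilde X\to Y$ a contraction, and let $A$ be an ample Cartier divisor on $Y$. Since $\tilde\pi^*A$ is semiample, in particular nef, on $\tilde X$, one has $\pi^*A = f^*(\tilde\pi^*A)\in f^*\mathrm{Nef}(\tilde X)\subseteq\mathrm{Mov}(X)$, so $M:=\pi^*A$ is a movable class; by the description of $\mathrm{Mov}(X)$ on a Mori dream space \cite{MDSpaper}, the stable base locus $\mathbf B(M)$ has codimension at least two in $X$ (for instance, $M$ is the strict transform of a semiample divisor under a birational map that is an isomorphism in codimension one).

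The first step I would carry out is a sublemma: \emph{every movable class is nonnegative on every negative curve}. Given a negative curve $c$ swept out by a family of integral curves covering a divisor $D$, the codimension bound on $\mathbf B(M)$ forces $D\not\subseteq\mathbf B(M)$, so for a suitable $m>0$ a general member $E'\in|mM|$ does not contain $D$; then $E'\cap D$ is a proper closed subset of $D$, and since the family sweeps out all of $D$, a general member $C$ of it is not contained in $\mathrm{Supp}(E')$. Hence $m\,(M\cdot c) = E'\cdot C\ge 0$, so $M\cdot c\ge 0$.

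With this in hand, the conclusion is immediate. Applying the sublemma to each $c_i$ gives $M\cdot c_i\ge 0$, while $M\cdot v = A\cdot\pi_*v = 0$ by the adjunction defining $\pi_*$; thus $0 = \sum_i a_i\,(M\cdot c_i)$ is a sum of nonnegative terms with strictly positive coefficients $a_i$, forcing $M\cdot c_i = 0$ for every $i$, i.e.\ $A\cdot\pi_*c_i = 0$. Since $A$ ranges over all ample divisors on $Y$ and the ample cone spans $N^1(Y)$, the linear functional $\pi_*c_i$ vanishes on a spanning set, so $\pi_*c_i = 0$ for all $i$.

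I expect the only delicate point to be the codimension-two bound on the stable base locus of a movable class on a Mori dream space, together with the standard geometric fact invoked in the sublemma, namely that an effective divisor meets a general member of a covering family of a divisor not contained in it along a proper closed subset; everything else is formal. Note that the argument uses only the relation $v=\sum a_i c_i$ with $a_i>0$ and $\pi_*v=0$, and not that $v$ is nef or extreme; those hypotheses merely record the setting in which the lemma will be applied.
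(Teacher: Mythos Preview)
Your proof is correct, and it is essentially the dual of the paper's argument. The paper resolves the indeterminacies of $\pi$ to a morphism $\tilde\pi:\tilde X\to Y$, observes that because each $c_i$ is represented by a family covering a divisor its general member avoids the codimension-$2$ indeterminacy locus, so the strict transforms give effective classes on $\tilde X$ whose $\tilde\pi$-pushforwards compute $\pi_*c_i\in\overline{NE}(Y)$; then $\sum a_i\,\pi_*c_i=\pi_*v=0$ in the pointed cone $\overline{NE}(Y)$ forces each term to vanish (equivalently, pair with an ample on $Y$). You instead pull the ample on $Y$ back to a movable class $M$ on $X$ and pair there, replacing ``$\pi_*c_i$ is effective on $Y$'' by the dual statement ``$M\cdot c_i\ge 0$ on $X$,'' which you prove via the codimension bound on the stable base locus of a movable class on a MDS. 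Both routes ultimately rest on positivity of an ample divisor on $Y$ and on the fact that the $c_i$ sweep out divisors; your version is more self-contained (the paper defers to unnamed ``standard lemmas'') and sidesteps any discussion of strict transforms, while the paper's version makes the geometric role of the covering-family hypothesis slightly more visible. Your closing remark that neither nefness nor extremality of $v$ is used is accurate and applies equally to the paper's proof.
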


\begin{proof}
Since $v$ and $c_i$ admit families of integral curves sweeping out subsets of codimension at most 1, we may resolve the indeterminacies of $X \dashrightarrow Y$ while preserving effectivity of $v$ and each $c_i$.  Our result then follows from standard lemmas.
\end{proof}

\begin{cor}\label{extreme rays cor}
Let $X$ be a MDS and $v,w \in \text{Nef}_1(X)$.  Suppose there exists negative curves $c_i$ and expressions $v = \sum a_i c_i$ and $w = \sum b_i c_i$ with $a_i >0$, $b_i \geq 0$.  For any rational contraction $\pi: X \dashrightarrow Y$, if $\pi_* v = 0$ then $\pi_* w = 0$ as well.
\end{cor}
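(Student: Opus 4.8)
The plan is to deduce Corollary~\ref{extreme rays cor} directly from the preceding Lemma. First I would apply the Lemma to the expression $v = \sum a_i c_i$: since all $a_i > 0$ and $\pi_* v = 0$, the Lemma gives $\pi_* c_i = 0$ for every $i$. The only subtlety is that the Lemma as stated assumes $v$ is an extreme ray of $\text{Nef}_1(X)$, whereas here $v$ is only assumed to lie in $\text{Nef}_1(X)$; so I would either (i) note that the proof of the Lemma never actually uses extremality — it only uses that $v$ and the $c_i$ are represented by families of integral curves sweeping out subsets of codimension at most one, which is automatic for $v \in \text{Nef}_1(X)$ and for the negative curves $c_i$ — or (ii) if one insists on the hypothesis, observe that $v$ can be written as a nonnegative combination of extreme rays of $\text{Nef}_1(X)$ and run the argument ray by ray, pushing the positivity of the $a_i$ through each summand. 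Option (i) is cleaner and I expect that is the intended reading.

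Once $\pi_* c_i = 0$ for all $i$ is established, the conclusion is immediate: $\pi_* w = \pi_*\bigl(\sum b_i c_i\bigr) = \sum b_i\, \pi_* c_i = 0$, using only linearity of the pushforward $\pi_* : N_1(X) \to N_1(Y)$ (which is well-defined after factoring $\pi$ through a SQM, as recalled in the discussion preceding the Lemma). The hypotheses $b_i \geq 0$ are not even needed for this final step; they are presumably included only so that $w \in \text{Nef}_1(X)$ is consistent, or to mirror the shape of the statement used downstream.

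The main — in fact the only — obstacle is the mismatch between the ``extreme ray'' hypothesis of the Lemma and the weaker hypothesis $v \in \text{Nef}_1(X)$ in the Corollary. I would resolve it by a one-sentence remark that the Lemma's proof applies verbatim to any $v \in \text{Nef}_1(X)$ admitting such a positive decomposition into negative curves, since the proof invokes only the existence of codimension-$\leq 1$ families and ``standard lemmas'' on resolving indeterminacies, neither of which uses extremality. Everything else is bookkeeping with the linear map $\pi_*$.

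\begin{proof}
Apply the Lemma to the nef curve $v$ and its decomposition $v = \sum a_i c_i$ with $a_i > 0$: the proof of the Lemma uses only that $v$ and each $c_i$ are represented by families of integral curves sweeping out subsets of codimension at most one, which holds for any $v \in \text{Nef}_1(X)$ and for negative curves, so the hypothesis that $v$ be extreme is not needed. Hence $\pi_* c_i = 0$ for all $i$. Since $\pi_* : N_1(X) \to N_1(Y)$ (defined via a SQM factoring $\pi$) is linear, $\pi_* w = \pi_*\bigl(\sum b_i c_i\bigr) = \sum b_i\, \pi_* c_i = 0$.
\end{proof}
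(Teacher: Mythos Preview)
Your proposal is correct and essentially matches the paper's argument: the paper simply reproves the Lemma inline (resolve indeterminacies, use that each $c_i$ sweeps out a divisor so its pullback is effective, hence $\tilde{\pi}_*(f^*c_i)=0$, then conclude by linearity) rather than citing it and remarking that extremality is unused. Your observation that the Lemma never uses the extreme-ray hypothesis is exactly right and is confirmed by the paper's proof of the Corollary, which drops it without comment.
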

\begin{proof}
Let $f: \tilde{X} \rightarrow X$ and $\tilde{\pi} = \pi \circ f : \tilde{X} \rightarrow Y$ be a resolution of indeterminacies.  Since representatives of each $c_i$ sweep out a divisor, the pullback $f^*(c_i)$ of each class lies in $\overline{NE}(\tilde{X})$.  Therefore, as $\tilde{\pi}_*(f^* v) = 0$, $\tilde{\pi}_*(f^*c_i) = 0$ for all $i$.  Hence $\tilde{\pi}_*(f^* w) = \pi_* w = 0$.
\end{proof}

\section{Rational Contractions of Fiber Type on Mori Dream Spaces}\label{Section MDS}

Throughout this section, let $X$ be a MDS.  We study rational contractions on $X$ through a finite set of \textit{initial} rational contractions of fiber type (Definition \ref{def initial}).  We associate these \textit{initial} rational contractions of fiber type to certain extreme rays of $\text{Nef}_1(X)$ in Proposition \ref{rational contractions cor}.  We translate this association to a statement about \textit{negative curves} in Corollary \ref{cor 2}.  This allows us to describe methods of identifying initial rational contractions of fiber type in Section \ref{identifying initial contractions}.  


\begin{defn}\label{def initial}

Two rational contractions $\pi_i : X \dashrightarrow Y_i$ are \textit{equivalent} if both $Y_i$ are $\mathbb{Q}$-factorial and $\pi_i$ differ by an SQM of their targets.  
A set $S$ of equivalence classes of rational contractions $\pi_s : X \dashrightarrow Z_s$ of fiber-type 
is \textit{initial} if 
\begin{enumerate}
    \item For any rational contraction $\phi : X \dashrightarrow Y$ of fiber type, there exists a factorization $f \circ \pi_s$ of $\phi$ away from a codimension 2 subset of $X$ such that $f : Z_s \dashrightarrow Y$ is a rational contraction.
    \item If in (1), $\phi = \pi_{s'}$ for some $s' \in S$, then $s' = s$
    ($f$ is a SQM).
\end{enumerate}
\end{defn}

\begin{rem}
$S$ is finite and each $Z_s$ must be $\mathbb{Q}$-factorial.  Thus, \textit{initial} rational contractions are equivalence classes of maps with targets defined up to SQM. 
\end{rem}


\subsection{Relation to Nef Curves} 

Initial rational contractions are closely related to \textit{special} rational contractions (Definition \ref{def special}).  We make this relation precise in Proposition \ref{rational contractions cor} using literature on nef curves deriving from \cite{batyrevcone}.

\begin{defn}[Defnition 2.3, Remark 2.4 \cite{CasagrandeRational}]\label{def special}
Let $f : X \dashrightarrow Y$ be a rational contraction of fiber type.  We say $f$ is \textit{special} if $Y$ is $\mathbb{Q}$-factorial and for every prime divisor $D \subset X$, $\text{codim } f(D) \leq 1$.
\end{defn}

\begin{prop}[Proposition 2.13 \cite{CasagrandeRational}]\label{special contraction}
Let $f: X \dashrightarrow Y$ a rational contraction of fiber type.  Then $f$ can be factored as
$$ X \xdashrightarrow{g} Z \xrightarrow{h} Y$$
where $g$ is a special rational contraction and $h$ is birational.  Moreover, such a factorization is unique up to composition with a SQM of $Z$.
\end{prop}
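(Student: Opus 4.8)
I would treat this as a statement about the wall‑and‑chamber structure of the Mori dream space $X$: the variety $Z$ should be the birational model of $Y$ obtained by ``extracting'' exactly the divisors that $f$ contracts too far. First I would reduce to the case where $f\colon X\to Y$ is an actual morphism with connected fibres: the small $\mathbb{Q}$-factorial modification occurring in a rational contraction can be absorbed on the source, and the composition of an SQM with a special rational contraction is again special, so no generality is lost. Since $f$ is of fibre type, $\dim Y<\dim X$; I would fix an ample $\mathbb{Q}$-divisor $A$ on $Y$, so that $f^{*}A$ is semiample and, by the projection formula, $Y\cong\text{Proj}\,R(X,f^{*}A)$. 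Let $D_{1},\dots,D_{r}$ be the prime divisors of $X$ with $\text{codim}_{Y}f(D_{i})\ge 2$; this is a finite set — the divisorial components of the locus where the fibres of $f$ jump in dimension — and these $D_{i}$ are exactly the obstruction to $f$ being special.

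For existence I would set $L:=f^{*}A-\sum_{i}\varepsilon_{i}D_{i}$ for small positive rationals $\varepsilon_{i}$, and put $Z:=\text{Proj}\,R(X,L)$; this ring is finitely generated because $X$ is a Mori dream space, $L$ is effective for $\varepsilon_{i}$ small, and the induced rational map $g\colon X\dashrightarrow Z$ is a rational contraction. Since each $D_{i}$ is $f$-vertical, $L$ restricts to $f^{*}A$ on a general fibre $F$ of $f$, so $F$ is also a general fibre of $g$; hence $\dim Z=\dim Y$, the rigidity lemma yields a birational morphism $h\colon Z\to Y$ with $h\circ g=f$, and $g$ is of fibre type. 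The key point is that $g$ is \emph{special}: a horizontal prime divisor of $X$ dominates $Z$; a prime divisor with divisorial image in $Y$ keeps divisorial image in $Z$ because $h$ is birational; and the negative coefficient $-\varepsilon_{i}$ is precisely what forces $\overline{g(D_{i})}$ to be a prime divisor of $Z$. Informally, $Z$ is the birational modification of $Y$ that extracts exactly the divisorial valuations $\text{ord}_{D_{i}}$ (and, when their centres are nested, the valuations created along the way), so no prime divisor of $X$ is contracted to codimension $\ge 2$ on $Z$.

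For uniqueness, suppose $X\xdashrightarrow{g}Z\xrightarrow{h}Y$ and $X\xdashrightarrow{g'}Z'\xrightarrow{h'}Y$ are two such factorisations, and let $\sigma\colon Z\dashrightarrow Z'$ be the birational map with $\sigma\circ g=g'$. If $\sigma$ contracted a prime divisor $B\subset Z$, write $B=\overline{g(D)}$ for a prime divisor $D\subset X$; then $h(B)=f(D)$ has codimension $\ge 2$ in $Y$ (otherwise $B$ would map to a divisor of $Y$, whose strict transform on $Z'$ is a divisor, contradicting that $\sigma$ contracts $B$), so $D$ is one of the $D_{i}$. But $g'$ is special, so $\overline{g'(D_{i})}=\sigma(B)$ is a prime divisor of $Z'$, again a contradiction. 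Hence $\sigma$ contracts no divisor, and symmetrically neither does $\sigma^{-1}$; thus $\sigma$ is an isomorphism in codimension one between $\mathbb{Q}$-factorial varieties, i.e.\ an SQM.

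The step I expect to be the real obstacle is the specialness claim: one must verify that $\text{Proj}\,R(X,f^{*}A-\sum\varepsilon_{i}D_{i})$ genuinely realises each $D_{i}$ as a divisor of $Z$ without over-contracting any other divisor of $X$, and that the $\varepsilon_{i}$ — chosen iteratively if the centres $f(D_{i})$ are nested — can be taken so that $Z$ is $\mathbb{Q}$-factorial and independent of the choice up to SQM. This is where one uses both the finiteness of the Mori chamber decomposition of $\overline{\text{Eff}}(X)$ and a local analysis of $f$ along the $D_{i}$; the remaining assertions are formal consequences of the Mori dream space structure recalled above.
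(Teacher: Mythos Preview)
The paper does not give its own proof of this proposition; it is quoted from Casagrande \cite{CasagrandeRational}, and the only hint the paper offers toward a proof is the sentence immediately following the statement: Casagrande's Construction~2.11 builds the factorisation by running, for each prime divisor $D\subset X$ with $\text{codim}_Y f(D)>1$, a \emph{relative} MMP for $-D$ over $Y$, and iterating until no such divisor remains.  That is a genuinely different mechanism from your global $\text{Proj}$ construction.

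Your uniqueness argument is the standard one and is fine.  For existence, the $\text{Proj}$ approach is reasonable in spirit, but two points need more than you give them.  First, the assertion that $L=f^{*}A-\sum\varepsilon_i D_i$ is effective is not automatic: $f^{*}A$ sits on the boundary of $\overline{\text{Eff}}(X)$, so subtracting effective classes can exit the cone; what you need is that every nef curve class $c$ with $f_{*}c=0$ satisfies $D_i\cdot c\le 0$, which ultimately rests on the fact that a moving curve in the fibres of $f$ lives in a \emph{general} fibre and hence misses each $D_i$.  Second---and this is exactly the obstacle you flag at the end---knowing that $\text{Proj}\,R(X,L)$ exists does not by itself force $g$ to send each $D_i$ onto a divisor: one must show the restriction $H^0(X,mL)\to H^0(D_i,mL|_{D_i})$ has image large enough to separate a divisor's worth of points on $D_i$.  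This is precisely where the relative-MMP route is cleaner: each step of a $(-D)$-MMP over $Y$ visibly lowers the codimension of the image of $D$, and termination is automatic on a Mori dream space.  Your $Z$ and Casagrande's should agree up to SQM, but Casagrande's construction delivers specialness step by step, whereas the $\text{Proj}$ construction requires the explicit verification you anticipate.
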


Moreover, \cite[Construction~2.11]{CasagrandeRational} may be used to explicitly construct this factorization using a sequence of relative MMPs for $-D$, where $D \subset X$ is a prime divisor with $\text{codim } f(D) > 1$ (more accurately such that $\overline{f(D)}$ is not a $\mathbb{Q}$-Cartier divisor in $Y$).

It is immediate that a set $S$ of initial rational contractions exists, and that each must be a special rational contraction.  However, not every special rational contraction is initial.  Indeed, the projection $\mathbb{P}^1 \times \mathbb{P}^1 \times \mathbb{P}^1 \rightarrow \mathbb{P}^1$ is special but not initial.  Proposition \ref{rational contractions cor} relates initial rational contractions of fiber-type to extreme rays of $\text{Nef}_1(X)$ using Proposition \ref{extreme contraction}, which generalizes contractions of \textit{coextremal rays} defined in \cite{batyrevcone}.

\begin{prop}[5.1-2 \cite{araujocone}, 2.17 \cite{CasagrandeCone}, 1.4 \cite{lehmanncone}]\label{extreme contraction}  For every extreme ray $v \in \text{Nef}_1(X)$, 
there is a birational contraction $f: X \dashrightarrow X'$ to a MDS $X'$ such that $f_* v \in \overline{NE}(X')$ is extreme.  Moreover the contraction $g : X' \rightarrow Y$ associated to $f_* v$ defines a rational contraction $\pi_v := g \circ f$ of fiber type whose resolution $h : \tilde{X} \rightarrow X$, $\tilde{\pi}_v : \tilde{X} \rightarrow Y$ is determined up to birational equivalence by the following properties: 
\begin{itemize}
    \item 
    For every movable curve $C \subset \tilde{X}$ with $h_*[C]$ proportional to $v$, $\tilde{\pi}_{v*}[C] = 0$.
    \item There is a movable curve $C$ through a general pair of points in a general fiber of $\tilde{\pi}_v$ with $h_*[C]$ proportional to $v$.
\end{itemize}
\end{prop}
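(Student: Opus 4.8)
The plan is to assemble the statement from the three cited sources, each of which establishes the relevant construction in a generality that includes a MDS — where, crucially, $\text{Eff}(X)$, $\text{Mov}(X)$, and $\text{Nef}_1(X)$ are all rational polyhedral — and then to verify the characterization by the two displayed properties by hand. When $N^1(X)$ has rank one there is nothing of substance to check, so I would assume $\dim N^1(X) \geq 2$; then an extreme ray $v$ of $\text{Nef}_1(X)$, being on the boundary of the dual of $\overline{\text{Eff}}(X)$, satisfies $D \cdot v = 0$ for some effective divisor $D$ on the boundary of $\text{Eff}(X)$. Scaling against such a $D$, one runs an MMP with scaling — available on a MDS since every divisor admits an MMP — which produces a birational contraction $f : X \dashrightarrow X'$, a composite of flips and divisorial contractions, along which the strict transform of $v$ stays movable and eventually becomes extremal in $\overline{NE}(X')$. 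This is the content of \cite[5.1-2]{araujocone} and \cite[1.4]{lehmanncone}; cf.\ \cite[2.17]{CasagrandeCone}. Since flips and divisorial contractions preserve $\mathbb{Q}$-factoriality, $X'$ is $\mathbb{Q}$-factorial, hence a MDS by the criterion recalled above.

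Next I would produce $g$ and verify that $\pi_v$ is of fiber type. Let $g : X' \rightarrow Y$ be the contraction of the extremal ray $\mathbb{R}_{\geq 0} f_* v \subset \overline{NE}(X')$; it exists because $X'$ is a MDS. As $f_* v$ is a movable curve class, curves of class in this ray cover $X'$, so every point of $X'$ lies on a $g$-contracted curve; hence $\dim Y < \dim X'$ and $g$, and therefore $\pi_v := g \circ f$, is of fiber type. This settles the existence part of the proposition.

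Then I would turn to the characterization. Fix a resolution $h : \tilde X \rightarrow X$, $\tilde\pi_v : \tilde X \rightarrow Y$ of $\pi_v$. The first bullet holds because a general member of a movable family of curves $C \subset \tilde X$ with $h_*[C] = \lambda v$ avoids the loci on which $h$ and $g \circ f$ fail to be morphisms, whence $\tilde\pi_{v*}[C] = g_* f_*(\lambda v) = 0$ as $f_* v \in \ker g_*$. The second bullet holds because a general fiber $F$ of the contraction $g$ of the movable extremal ray $\mathbb{R}_{\geq 0} f_* v$ is covered by — and, by the analysis of such contractions in the cited works, connected by — curves of class proportional to $f_* v$; pulling back a general such curve joining two general points of $F$, and using that the family covers, produces a movable $C \subset \tilde X$ with $h_*[C]$ proportional to $v$. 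For uniqueness up to birational equivalence, suppose $\tilde\pi_v' : \tilde X' \rightarrow Y'$ (with its own resolution) also satisfies both bullets; passing to a common resolution of $\tilde X$ and $\tilde X'$, the second bullet for $\tilde\pi_v$ together with the first for $\tilde\pi_v'$ forces a general pair of points in a general fiber of $\tilde\pi_v$ to have equal image under $\tilde\pi_v'$, so the general fiber of $\tilde\pi_v$ — being irreducible — is contracted by $\tilde\pi_v'$, and symmetrically; hence the general fibers coincide, $Y \dashrightarrow Y'$ is birational, and the two maps agree over it.

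The main obstacle is the second bullet: exhibiting a single movable curve of class proportional to $v$ through a general pair of points of a general fiber. This is the substantive input imported from \cite{araujocone} and \cite{lehmanncone}, resting on the structure of the connecting family of curves in the contracted fibers produced by the MMP with scaling; in the MDS setting one could alternatively argue via finiteness of the Mori chamber decomposition and boundedness of the relevant families of curves. Everything else is bookkeeping with pushforwards together with the polyhedrality and MDS-stability supplied by the hypothesis on $X$.
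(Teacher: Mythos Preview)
The paper does not supply its own proof of this proposition: the bracketed citations in the header signal that the result is imported wholesale from Araujo, Casagrande, and Lehmann, and the paper proceeds directly to the next proposition. So there is no in-paper argument to compare against.

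That said, your sketch is a faithful reconstruction of what those references do. Running an MMP with scaling against a boundary divisor $D$ with $D\cdot v=0$ is exactly the mechanism in \cite{araujocone} and \cite{lehmanncone} for pushing a movable extremal class into the Mori cone of some birational model, and your observations that $\mathbb{Q}$-factoriality is preserved and that the resulting ray must be of fiber type (since it is movable) are the right ones. Your treatment of the two bullets and of uniqueness via a common resolution plus a rigidity/irreducible-general-fiber argument is also the standard route. The one place your write-up is honest about being soft is the second bullet --- that two general points of a general fiber are joined by a movable curve of class proportional to $v$ --- and you correctly flag this as the genuine input from the cited sources (obtained there by slicing fibers with very ample divisors to produce complete-intersection curves of the correct class). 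Nothing here deviates from, or goes beyond, what the cited literature provides.
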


\begin{prop}\label{rational contractions cor}
Let $\text{Mov}(X)^* \subset N_1(X)$ be dual to the cone of moving divisors.  Initial rational contractions of fiber-type on $X$ are in bijection with minimal faces of $\text{Mov}(X)^*$ that intersect $\text{Nef}_1(X)$.  For each such minimal face $F\subset \text{Mov}(X)^*$, the associated initial rational contraction is the unique special rational contraction $\pi_v$ associated by \ref{extreme contraction} to any extreme ray $v \in \text{Nef}_1(X)$ lying in $F$.
\end{prop}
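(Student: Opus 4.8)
The plan is to prove the claimed bijection by establishing two things: first, that the map sending an initial rational contraction to the face it induces on $\text{Mov}(X)^*$ is well-defined and injective; and second, that it is surjective, i.e.\ that every minimal face of $\text{Mov}(X)^*$ meeting $\text{Nef}_1(X)$ arises from a special rational contraction $\pi_v$ as in Proposition \ref{extreme contraction}. Throughout I identify $N_1(X)$ with $N_1(\tilde X)$ for any SQM $\tilde X$ of $X$, so that $\text{Nef}_1(X) \subseteq \text{Mov}(X)^*$ and the cones $\overline{NE}(\tilde X)$ all sit inside the same vector space. The key structural input is that a rational contraction $\pi : X \dashrightarrow Y$ of fiber type corresponds to a face of the fan decomposition of $\text{Mov}(X)$; dually $\ker \pi_*$ is spanned by a face $F_\pi$ of $\text{Mov}(X)^*$, and $\pi$ is of fiber type precisely when $F_\pi \cap \text{Nef}_1(X) \neq \{0\}$.

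First I would set up the correspondence in one direction. Given an initial rational contraction $\pi_s : X \dashrightarrow Z_s$, let $F_s = (\ker \pi_{s*}) \cap \text{Mov}(X)^*$, a face that meets $\text{Nef}_1(X)$ nontrivially since $\pi_s$ is of fiber type. I claim $F_s$ is a \emph{minimal} such face. If not, there is a proper subface $F' \subsetneq F_s$ with $F' \cap \text{Nef}_1(X) \neq \{0\}$; the face $F'$ of the $\text{Mov}(X)$-fan corresponds to a rational contraction $\phi : X \dashrightarrow Y$ of fiber type with $\ker \phi_* = \text{span}(F') \subsetneq \text{span}(F_s) = \ker \pi_{s*}$. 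Using Definition \ref{def initial}(1), $\phi$ factors as $f \circ \pi_s$ off a codimension-$2$ set, forcing $\ker \phi_* \supseteq \ker \pi_{s*}$, a contradiction. Conversely, distinct initial contractions give distinct faces: if $F_{s} = F_{s'}$ then $\ker \pi_{s*} = \ker \pi_{s'*}$, so each factors through the other off codimension $2$ (apply Definition \ref{def initial}(1) both ways), and Definition \ref{def initial}(2) then gives $s = s'$. This shows $s \mapsto F_s$ is a well-defined injection into the set of minimal faces meeting $\text{Nef}_1(X)$.

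Next I would show surjectivity. Let $F \subset \text{Mov}(X)^*$ be a minimal face meeting $\text{Nef}_1(X)$, and pick an extreme ray $v \in \text{Nef}_1(X)$ lying in the relative interior of $F$ (such a $v$ exists: $F\cap\text{Nef}_1(X)$ is a nonzero subcone of $F$, and by minimality its span must be all of $\text{span}(F)$, so an extreme ray of $\text{Nef}_1(X)$ in general position inside this subcone lies in the relative interior of $F$). Apply Proposition \ref{extreme contraction} to get the rational contraction $\pi_v = g\circ f$ of fiber type, and use Proposition \ref{special contraction} to factor $\pi_v = h \circ g'$ with $g' : X \dashrightarrow Z$ special and $h$ birational; replace $\pi_v$ by $g'$, which is still of fiber type with $v \in \ker g'_*$. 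Now $\ker g'_* = \text{span}(F_{g'})$ for a face $F_{g'}$ of $\text{Mov}(X)^*$ meeting $\text{Nef}_1(X)$, and since $v$ is in the relative interior of $F$ and lies in $F_{g'}$, we get $F \subseteq F_{g'}$. Minimality of $F$ combined with $F_{g'}$ meeting $\text{Nef}_1(X)$ will force $F = F_{g'}$ once we check $F_{g'}$ is itself minimal — which follows because $g'$ is special, hence by the characterization of special contractions (no divisor has codimension $\geq 2$ image) it cannot factor nontrivially through another fiber-type contraction without that factorization being a SQM, exactly mirroring the argument that initial contractions give minimal faces. Finally I would verify $g'$ is initial: any fiber-type $\phi : X \dashrightarrow Y$ with $F_\phi \cap \text{Nef}_1(X) \ni$ a ray of $F$ has $\ker \phi_* \supseteq \text{span}(F) = \ker g'_*$, and then Corollary \ref{extreme rays cor} together with the factorization lemma for rational contractions of MDS (if $\ker \phi_* \supseteq \ker g'_*$ then $\phi$ factors through $g'$ over $\text{dom}(g')$, i.e.\ off codimension $2$) gives Definition \ref{def initial}(1); part (2) is the uniqueness-up-to-SQM in Proposition \ref{special contraction}. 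The uniqueness clause of the proposition — that $\pi_v$ depends only on $F$, not on the chosen $v$ — follows because any two extreme rays in the relative interior of $F$ span the same kernel $\text{span}(F)$, and special fiber-type contractions with equal kernel agree up to SQM by Proposition \ref{special contraction}.

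The main obstacle I anticipate is the step identifying $\ker \pi_{v*}$ (equivalently $\ker g'_*$) precisely with $\text{span}(F)$ rather than merely containing it: Proposition \ref{extreme contraction} only guarantees $v \in \ker \pi_{v*}$ and a universal property about movable curves proportional to $v$, so I must argue that passing to the special model $g'$ contracts \emph{no more} than the face $F$ forces. The cleanest route is to show that $g'$, being special and extremal over $v$, corresponds to the \emph{smallest} face of the $\text{Mov}(X)$-fan whose dual contains $v$ — i.e.\ that the universal property in Proposition \ref{extreme contraction} characterizes $\ker\pi_{v*}$ as the minimal subspace $L \supseteq \mathbb{R}v$ that is the span of a face of $\text{Mov}(X)^*$ meeting $\text{Nef}_1(X)$ — and then invoke minimality of $F$. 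Getting this minimality cleanly from the quoted statements of \cite{araujocone}, \cite{CasagrandeCone}, \cite{lehmanncone}, \cite{CasagrandeRational} is where the real care is needed; everything else is bookkeeping with the face/fan correspondence for rational contractions of a Mori dream space.
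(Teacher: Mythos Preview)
Your overall architecture matches the paper's: both use Proposition~\ref{special contraction} to replace $\pi_v$ by a special contraction and then argue that the resulting map is determined by the minimal face of $\text{Mov}(X)^*$ containing $v$. The paper's proof is four sentences; your version fills in the bijection far more carefully, and your forward direction (initial $\Rightarrow$ minimal face, plus injectivity) is fine.

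There is, however, one genuine gap. In the surjectivity step you assert that $F_{g'}$ is minimal ``because $g'$ is special, hence \ldots\ it cannot factor nontrivially through another fiber-type contraction without that factorization being a SQM.'' This is false for special contractions in general: the paper's own example just before Corollary~\ref{cor 2} observes that $(\mathbb{P}^1)^3 \to \mathbb{P}^1$ is special but not initial, precisely because it factors through the fiber-type map $(\mathbb{P}^1)^3 \to (\mathbb{P}^1)^2$. Being special controls the images of divisors, not minimality of the associated face of $\text{Mov}(X)^*$. What makes your $g'$ initial is not that it is special per se, but that it comes from an \emph{extreme} ray $v$ via Proposition~\ref{extreme contraction}.

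The fix is exactly the ``cleanest route'' you already identify in your final paragraph, and it is what the paper actually does. The universal property in Proposition~\ref{extreme contraction}---general fibers of $\pi_v$ are swept out by movable curves proportional to $v$---forces any rational contraction $\phi$ with $\phi_*(v)=0$ to factor through $\pi_v$, hence through its special model $g'$. The paper packages this step via Corollary~\ref{extreme rays cor}: write $w$ in the relative interior of its minimal face as a positive combination of the extreme rays of that face (classes sweeping out divisors), and $v$ in the same face as a nonnegative combination of those same classes; then $\pi_{w*}(w)=0$ forces $\pi_{w*}(v)=0$, so $\pi_w$ factors through $\pi_v$. Commit to this route from the start rather than trying to compute $\ker g'_*$ directly, and the obstacle you were worried about disappears.
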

\begin{proof}
Using Proposition \ref{special contraction}, we may assume $\pi_v$ from \ref{extreme contraction} is special.  Such a map $\pi_v$ must be unique up to SQM of the target.  It follows that a set of initial rational contractions is comprised of such maps $\pi_v$.  Moreover, if $v,w \in \text{Nef}_1(X)$ are two extreme rays and the minimal face of the dual of $\text{Mov}(X)$ containing $w$ also contains $v$, then by Corollary \ref{extreme rays cor} 
$\pi_w$ must factor rationally through $\pi_v$.
\end{proof}

\begin{ex}
\textbf{Caution:} Just as not every special contraction is initial, the special rational contraction $\pi_v$ may not be initial, and for two extreme rays $v \neq w$, it is possible that $\pi_v = \pi_w$.  Indeed, consider the Fano blow-up $X \rightarrow \mathbb{P}^3$ with center a curve $C$ of degree 6 and genus 3 \cite{mori1983classification}.  Extreme rays of $\text{Nef}_1(X)$ are spanned by the class of a general line in $\mathbb{P}^3$ and the strict transform of a cubic meeting $C$ transversely at 8 points \cite{BurkeJovinelly}.  The only rational contraction of fiber-type is the trivial map $X \rightarrow \text{Spec}(k)$.  This may be interpreted in two ways:
\begin{enumerate}
    \item The cone of moving divisors $\text{Mov}(X)$ on $X$ has trivial intersection with the boundary of $\text{Eff}(X)$,
    \item The minimal face of $\overline{NE}(X)$ containing either extreme ray of $\text{Nef}_1(X)$ is the same, and $\text{Mov}(X) = \text{Nef}(X)$.
\end{enumerate}
Were $\text{Mov}(X) \neq \text{Nef}(X)$,
the minimal face of $\overline{NE}(\tilde{X})$ for a SQM $X \rightarrow \tilde{X}$ containing one extreme ray of $\text{Nef}_1(X)$ may shrink in dimension.  
\end{ex}

In lieu of computing $\text{Mov}(X)$ to apply Proposition \ref{rational contractions cor}, one may use properties of negative curves.  
For instance, when $X = \overline{M}_{0,6}$, each extreme ray of $\text{Nef}_1(X)$ may be expressed as a sum of negative curves sweeping out boundary divisors.  This allows us to apply the following corollary.

\begin{cor}\label{cor 2}
Suppose $\mathcal{C}$ is a finite collection of negative curves that bounds a cone $\mathfrak{M} \subseteq \text{Eff}(X)$.  Let $S$ be a minimal set of extreme rays of $\text{Nef}_1(X)$ such that every face of $\mathfrak{M}^*$ meeting $\text{Nef}_1(X)$ contains a ray in $S$.  Initial rational contractions of fiber-type on $X$ are the unique special rational contractions $\pi_v$ associated to $v \in S$.
\end{cor}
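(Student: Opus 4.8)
The plan is to deduce Corollary \ref{cor 2} directly from Proposition \ref{rational contractions cor} by comparing the cone $\mathfrak{M}$ with $\text{Mov}(X)$. The key observation is that although $\mathfrak{M}$ may strictly contain $\text{Mov}(X)$, the hypothesis $\mathfrak{M} \subseteq \text{Eff}(X)$ forces the two cones to agree along the boundary of $\text{Eff}(X)$ in precisely the region relevant to fiber-type contractions. First I would recall that $\text{Mov}(X) \subseteq \text{Eff}(X)$ always, and that a negative curve $c \in \mathcal{C}$ sweeping out a divisor $D$ satisfies $c \in \overline{NE}(X)$ after passing to a resolution, so $D.c < 0$ means the hyperplane $c^\perp$ cuts $D$ off from the rest of $\overline{\text{Eff}}(X)$; in particular every class in $\mathfrak{M}$ pairs nonnegatively with $\mathcal{C}$, so $\mathfrak{M}$ is cut out by supporting hyperplanes of $\text{Eff}(X)$ at its extremal rigid divisors. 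Since $\text{Mov}(X)$ is also contained in each half-space $\{D : D.c \geq 0\}$ (a moving divisor restricts to an effective, hence nonnegative, class on the divisor $D$ swept out by $c$), we get $\text{Mov}(X) \subseteq \mathfrak{M} \subseteq \text{Eff}(X)$, and the two outer cones share the facets $c^\perp$ for $c \in \mathcal{C}$.

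The next step is to compare the duals. Dualizing the inclusion $\text{Mov}(X) \subseteq \mathfrak{M}$ gives $\mathfrak{M}^* \subseteq \text{Mov}(X)^*$. I would then argue that this inclusion is an equality on the part of $\text{Mov}(X)^*$ that meets $\text{Nef}_1(X) = \text{Eff}(X)^*$: any $v \in \text{Nef}_1(X)$ lies in $\mathfrak{M}^*$ as well, because $\mathfrak{M} \subseteq \text{Eff}(X)$ implies $\text{Eff}(X)^* \subseteq \mathfrak{M}^*$. Hence $\text{Nef}_1(X) \subseteq \mathfrak{M}^* \subseteq \text{Mov}(X)^*$. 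Now I claim the faces of $\mathfrak{M}^*$ and of $\text{Mov}(X)^*$ that meet $\text{Nef}_1(X)$ are in inclusion-preserving bijection: a face of either cone meeting $\text{Nef}_1(X)$ is determined by which facets of the respective dual cone ($\mathfrak{M}$ or $\text{Mov}(X)$) vanish on it, and since those facets agree near where $\text{Nef}_1(X)$ touches them — i.e., the supporting hyperplanes along the boundary of $\text{Eff}(X)$ coming from the negative curves in $\mathcal{C}$ are common to both — the combinatorics of faces meeting $\text{Nef}_1(X)$ is identical. Concretely, for an extreme ray $v \in \text{Nef}_1(X)$, the minimal face of $\mathfrak{M}^*$ containing $v$ is $\mathfrak{M}^* \cap v^{\perp\perp}$-style and equals the minimal face of $\text{Mov}(X)^*$ containing $v$, because both are the set of points of the respective cone lying on every facet of the dual that vanishes at $v$, and the relevant facets (those whose hyperplanes pass through a neighborhood of $v$ in $\text{Eff}(X)$'s boundary) coincide.

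With that comparison in hand the corollary is immediate: by Proposition \ref{rational contractions cor}, initial rational contractions of fiber type on $X$ are in bijection with minimal faces of $\text{Mov}(X)^*$ meeting $\text{Nef}_1(X)$, and the associated contraction for such a face $F$ is the special rational contraction $\pi_v$ of any extreme ray $v \in \text{Nef}_1(X) \cap F$. By the preceding paragraph these minimal faces are exactly the minimal faces of $\mathfrak{M}^*$ meeting $\text{Nef}_1(X)$, and the hypothesis on $S$ says precisely that $S$ is a minimal collection of extreme rays of $\text{Nef}_1(X)$ hitting every such face. Choosing one $v \in S$ in each minimal face and invoking Proposition \ref{extreme contraction} together with Proposition \ref{special contraction} to upgrade $\pi_v$ to its special representative yields the claimed list; minimality of $S$ and part (2) of Definition \ref{def initial} (via Corollary \ref{extreme rays cor}, as in the proof of Proposition \ref{rational contractions cor}) ensure there are no repetitions and nothing is missed.

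The main obstacle I anticipate is justifying carefully that the minimal face of $\text{Mov}(X)^*$ through a nef-curve ray $v$ is unchanged when $\text{Mov}(X)$ is replaced by the possibly larger cone $\mathfrak{M}$. The subtlety is that $\mathfrak{M}$ could have \emph{fewer} facets than $\text{Mov}(X)$ (the negative curves in $\mathcal{C}$ need not cut out all of $\text{Mov}(X)$'s walls), so a priori $\mathfrak{M}^*$ could have larger faces; one must use that the extra facets of $\text{Mov}(X)$ not among $\{c^\perp : c \in \mathcal{C}\}$ are facets on which no nef curve class vanishes — equivalently, they lie in the interior of $\text{Eff}(X)$ away from its boundary — so they do not affect which faces meet $\text{Nef}_1(X)$. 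Pinning down this interior/boundary dichotomy, presumably via the fact that a facet of $\text{Mov}(X)$ meeting $\text{Nef}_1(X)$ nontrivially must be supported by a curve class in $\overline{NE}(X)$ that is a limit of negative curves, is the delicate point; everything else is formal cone duality.
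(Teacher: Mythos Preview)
Your setup is correct: $\text{Mov}(X)\subseteq\mathfrak{M}\subseteq\text{Eff}(X)$, hence $\text{Nef}_1(X)\subseteq\mathfrak{M}^*\subseteq\text{Mov}(X)^*$. The gap is exactly where you flag it: you assert that the minimal face of $\text{Mov}(X)^*$ through a nef ray $v$ coincides with the minimal face of $\mathfrak{M}^*$ through $v$, but you do not prove this, and your proposed justification (that the ``extra'' facets of $\text{Mov}(X)$ lie in the interior of $\text{Eff}(X)$ and so are irrelevant) is not established. In general $\mathfrak{M}\cap v^{\perp}$ can strictly contain $\text{Mov}(X)\cap v^{\perp}$ with a larger linear span, so the corresponding minimal faces in the duals need not match. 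Without this, your reduction to Proposition~\ref{rational contractions cor} does not go through.

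The paper avoids this comparison entirely. Rather than transporting the conclusion of Proposition~\ref{rational contractions cor} from $\text{Mov}(X)^*$ to $\mathfrak{M}^*$, it observes that the \emph{proof} of Proposition~\ref{rational contractions cor} can be rerun verbatim with $\mathfrak{M}^*$ in place of $\text{Mov}(X)^*$. Two ingredients are needed for that argument: (i) the cone in question contains $\text{Nef}_1(X)$, so every extreme nef ray lies in some face; and (ii) every element of the cone is a nonnegative combination of negative curves, so Corollary~\ref{extreme rays cor} applies to compare $\pi_v$ and $\pi_w$ when one lies in the minimal face of the other. Both hold for $\mathfrak{M}^*$ by hypothesis: $\mathfrak{M}\subseteq\text{Eff}(X)$ gives (i), and $\mathfrak{M}^*$ is literally the cone generated by $\mathcal{C}$, giving (ii). (Indeed, (ii) is arguably cleaner for $\mathfrak{M}^*$ than for $\text{Mov}(X)^*$, since the generators of the latter are not a priori negative curves.) Thus the bijection between initial contractions and minimal faces meeting $\text{Nef}_1(X)$ holds already for $\mathfrak{M}^*$, and no comparison with $\text{Mov}(X)^*$ is required.
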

\begin{proof}
Clearly $\text{Mov}(X) \subseteq \mathfrak{M}$, so we may replace $\text{Mov}(X)^*$ in Proposition \ref{rational contractions cor} with the cone $\mathfrak{M}^*$ generated by $\mathcal{C}$.  Since $\mathfrak{M} \subseteq \text{Eff}(X)$, $\mathfrak{M}^* \supseteq \text{Nef}_1(X)$, and Corollary \ref{extreme rays cor} implies our claim.
\end{proof}

\begin{rem}
For any special rational contraction $X \dashrightarrow Y$, there are finitely many prime divisors in the base $Y$ dominated by reducible divisors in $X$ \cite{CasagrandeRational}.  Their preimages are unions of divisors swept out by \textit{negative curves} which sum to the class of a nef curve on a general fiber.
\end{rem}


To identify a set $S$ of extreme rays as above, we use Algorithm \ref{negCurvesAlg}.  Given finite collections $\mathcal{C}$ of \textit{negative curves} and $\mathcal{D}$ of effective divisors on a variety $X$, Algorithm \ref{negCurvesAlg} provides a method for verifying whether the cone $\mathfrak{M}$ bounded by $\mathcal{C}$ is contained in $\text{Eff}(X)$.  When it is, Algorithm \ref{negCurvesAlg} shows $\mathcal{D}$ generates $\text{Eff}(X)$ and identifies a set $S$ of extreme nef curves as in Corollary \ref{cor 2}.  The following paragraphs describe how the special rational contractions $\pi_v$ may be identified.

\subsection{Identifying Initial Rational Contractions} \label{identifying initial contractions}

Proposition \ref{extreme contraction} describes how a rational contraction of fiber type $\phi = \pi_v : X \dashrightarrow Y$ may be obtained from any extreme ray $v \in \text{Nef}_1(X)$.  Corollary \ref{cor 2} may be used to identify a set $S$ of these extreme rays for which the corresponding special $\pi_v$ are initial.  If $f : X \dashrightarrow \tilde{X}$ is an SQM such that the dimension of smallest face $F\subseteq \overline{\text{NE}}(\tilde{X})$ containing $v \in \text{Nef}_1(X) = \text{Nef}_1(\tilde{X})$ is minimal among all SQMs, then a special $\pi_v$ is given by $\pi \circ f$, where $\pi : \tilde{X} \rightarrow \tilde{Y}$ is the contraction associated to $F$.  

\cite{CasagrandeRational} offers a geometric perspective, on which we elaborate when $X = \overline{M}_{0,6}$.  This allows us to resolve a birationally equivalent rational contraction $\phi : X \dashrightarrow Y$ with $\mathbb{Q}$-factorial target to obtain a special rational contraction $\pi_v : X \dashrightarrow \tilde{Y}$, where $\tilde{Y} \rightarrow Y$ is a blow-up.  In each case, the rational contraction $\phi : X = \overline{M}_{0,6} \dashrightarrow Y$ is identified in \cite[Section~5.2]{Hassett_2002} as a contraction of the \textit{coextremal ray} $v$ \cite{batyrevcone}.

In all cases, resolving contractions $\phi : \overline{M}_{0,6} \dashrightarrow Y$ of \textit{coextremal rays} in \cite[Section~5.2]{Hassett_2002} as described above requires flipping 
rigid one-dimension components (flipping curves) of special, two-dimensional fibers of $\phi$.  These flipping curves $f_i \subset \overline{M}_{0,6}$ meet each member of a family of negative curves on $\overline{M}_{0,6}$ sweeping out a boundary divisor $E$ (a two-dimensional component of the special fiber of $\phi$).  If the negative curves have class $e$, then $e + \sum f_i$ is the class of a general fiber of $\phi$.  The Attiyah-flop of each $f_i \in \overline{NE}(X)$ produces a SQM $\tilde{X}$ and a rational contraction $\tilde{\phi} : \tilde{X} \dashrightarrow \tilde{Y}$ factoring $\phi$ into $\psi \circ \tilde{\phi}$, where $\psi : \tilde{Y} \rightarrow Y$ is the blow-up of the point $p = \phi(f_i)$.  The strict transform of $E$ in $\tilde{X}$ dominates the exceptional locus $\psi^{-1}(p)$.  After performing this operation for each divisor $E \subset \overline{M}_{0,6}$ whose image under $\phi$ has codimension $\geq 2$, we obtain a 
special rational contraction $\pi : \overline{M}_{0,6} \dashrightarrow \tilde{X} \rightarrow \tilde{Y}$. 

\section{A General Approach To Cones Defined by Negative Curves}\label{NegTech}
To prove Theorem \ref{coneThm}, \cite{Hassett_2002} uses a computer algorithm to compute the dual description of a cone $\mathfrak{E}\subset \text{Eff}(\overline{M}_{0,n})$ 
and proves each facet is a positive linear combination of negative curves on divisors contained in $\mathfrak{E}$.  The first step of this procedure, computing a dual description, is too complicated to verify by hand and sometimes impossible with modern technology.  
This section describes an alternative algorithm, Algorithm \ref{negCurvesAlg}, that verifies $\mathfrak{E}=\text{Eff}(X)$ without computing a dual description.  We use Algorithm \ref{negCurvesAlg} in Section \ref{Section10} to provide a computer-free proof of Theorem \ref{coneThm}.  
Like \cite{castravet2008cox}, our proof demonstrates by hand that $\text{Eff}(\overline{M}_{0,6})$ is generated by boundary and Keel-Vermiere divisors; however, we also recover information about rational contractions on $\overline{M}_{0,6}$ using Corollary \ref{cor 2}.

As described in Section \ref{Notation}, let $X$ be a variety, $\mathcal{D}$ be a finite set of extreme divisors on $X$, and $\mathcal{C}$ be a finite collection of negative curves sweeping out divisors in $\mathcal{D}$.  We let $\mathfrak{E}$ be the cone generated by $\mathcal{D}$, and
$$\mathfrak{M}=\{ D\in N^1(X) | D.c \geq 0 \text{ for all } c\in \mathcal{C}\}.$$
To show $\mathfrak{E} = \text{Eff}(X)$, it is sufficient to show $\mathfrak{M} \subset \mathfrak{E}$, as $\text{Eff}(X)\subseteq \mathfrak{E}+\mathfrak{M}$.  We use the fact that $\mathfrak{M}$ is bounded by negative curves sweeping out divisors in $\mathfrak{E}$ to reduce showing $\mathfrak{M} \subset \mathfrak{E}$ to simpler problems about cones of much lower dimension.  
More specifically, it is sufficient to check that $\mathfrak{M}\cap \{ c=0\}\subset\mathfrak{E}$ for finitely many nef curve classes $c$ generated by $\mathcal{C}$.



\subsection{Notation}  Since our subject of interest is cones, we say that a set of vectors \textit{generates} another vector $v$ if $v$ lies in their nonnegative linear span.  In particular, a set of curves $I$ \textit{generates} another curve $c$ if $c$ can be written as a nonnegative linear combination of curves in $I$.  In what follows, let $I$ be a subset of $\mathcal{C}$ and $F$ be a face of $\mathfrak{M}$.  We make the following definitions.
\begin{defn}\label{def nef minimal}
A subset $I$ of $\mathcal{C}$ is called \textit{nef-minimal} if $I$ generates a nonzero nef curve, but no proper subset of $I$ generates a nonzero nef curve.
\end{defn}
\begin{rem}\label{matrixAlg}
Suppose curves in $I=\{c_1,\dots, c_n\}$ sweep out distinct divisors $D_i=\mathbf{N}(c_i)$.  Then $I$ generates a nonzero nef curve if and only if placing the $n\times n$ matrix $A_I=[(D_i.c_j)]_{i,j\leq n}$ in row elechelon form by adding positive multiples of rows to other rows produces a row with nonnegative entries.  If we only add positive multiples of each row to rows beneath it, this is equivalent to the appearance of a nonnegative diagonal entry.
\end{rem}
\begin{defn}
For $I\subset\mathcal{C}$, let $\mathbf{F}(I)\subset \mathfrak{M}$ be the face of $\mathfrak{M}$ defined by the vanishing of all curves in $I$. 
Similarly, for a face $F$ of $\mathfrak{M}$, let $\mathbf{I}(F)\subset\mathcal{C}$ be those curves which vanish on $F$.
\end{defn}
Through abuse of notation, if $c$ is a nef curve generated by $\mathcal{C}$, we define $\mathbf{F}(c)$ as the intersection of $\mathfrak{M}$ with the vanishing of $c$.  This will be a face of $\mathfrak{M}$.  
\begin{rem}
A curve $c\in\mathcal{C}$ is in $\mathbf{I}(\mathbf{F}(I))$ if and only if $-c$ is generated by $\mathcal{C}\cup\{-\alpha | \alpha\in I\}$.
\end{rem}
\begin{rem}\label{Relation faces and subsets}
The relationship between faces of $\mathfrak{M}$, subsets of $\mathcal{C}$, and nef curves is essential to Algorithm \ref{negCurvesAlg} and the proof of Theorem \ref{coneThm}.  Suppose $I\subset\mathbf{I}(F)$ and $c=\sum_{c_i\in I} a_i c_i$ is a nef curve with $a_i > 0$ for all $i$.  For any $D\in\mathfrak{M}$, if $D.c = 0$, then $D.c_i = 0$ for all $i$, and consequently $D\in\mathbf{F}(I)$.  
It follows that $\mathbf{F}(I)=\mathbf{F}(c)$.  If $I=\mathbf{I}(F)$, then $F=\mathbf{F}(I)=\mathbf{F}(c)$ as well, but in general we only have $F\subseteq\mathbf{F}(c)$ for any nef curve $c$ generated by $\mathbf{I}(F)$.
\end{rem}
\begin{defn}\label{def cover}
We say a nef curve $c$ (generated by $\mathcal{C}$) \textit{covers} a nef-minimal subset $I\subset\mathcal{C}$ if $\mathbf{F}(c)\supseteq \mathbf{F}(I)$.  This happens if and only if $\mathbf{I}(\mathbf{F}(I))$ generates $c$.
\end{defn}
We now state our method as a proposition.
\begin{prop}\label{nefCovers}
Let $X,\mathcal{D},\mathcal{C},\mathfrak{E},\text{ and } \mathfrak{M}$ be as defined.  To show $\mathfrak{M}\subset\mathfrak{E}$, and thus that $\mathfrak{E}=\text{Eff}(X)$, it is sufficient to show that $\mathbf{F}(c)\subset\mathfrak{E}$ for a collection of nef curves generated by $\mathcal{C}$ that covers all nef-minimal subsets $I\subset\mathcal{C}$.
\end{prop}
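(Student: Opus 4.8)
The plan is to reduce the containment $\mathfrak{M} \subseteq \mathfrak{E}$ to checking the containment face-by-face, and then to observe that the relevant faces are controlled entirely by nef-minimal subsets of $\mathcal{C}$. First I would recall the basic decomposition: since $\mathfrak{M}$ is the cone dual to $\mathcal{C}$ and every $c \in \mathcal{C}$ is a negative curve on some divisor in $\mathcal{D} \subseteq \mathfrak{E}$, the pairing $c$ is nonnegative on all of $\mathfrak{E}$ except the single ray swept out by $\mathbf{N}(c)$, which it pairs negatively with. Hence $\text{Eff}(X) \subseteq \mathfrak{E} + \mathfrak{M}$, and it suffices to prove $\mathfrak{M} \subseteq \mathfrak{E}$.

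Next I would argue that it is enough to show $\mathbf{F}(c) \subseteq \mathfrak{E}$ as $c$ ranges over any collection of nef curves (generated by $\mathcal{C}$) that covers all nef-minimal subsets. The key point is that every boundary face of $\mathfrak{M}$ on which some element of $\mathcal{C}$ vanishes contains a nef-minimal subset in its vanishing set: indeed, if $D \in \mathfrak{M}$ lies on a proper face, then some nonempty $J = \{c \in \mathcal{C} : D.c = 0\}$ is nonempty, and $D$ witnesses that the $c_i \in J$ admit a common zero in $\mathfrak{M}$; from the dual side, $\mathbf{F}(J) \ni D$ is nonempty and $\mathbf{I}(\mathbf{F}(J))$ generates a nef curve (any $D' \in \mathbf{F}(J)$ pairs nonnegatively with all of $\mathcal{C}$ and zero with $J$), so $J$ contains a nef-minimal subset $I$ — take a minimal subset of $\mathbf{I}(\mathbf{F}(J))$ that still generates a nonzero nef curve, which is nonempty because, e.g., any positive combination of $\mathbf{I}(\mathbf{F}(J))$ that is nef is nonzero as the $c_i$ are negative curves. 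If $c$ covers $I$, then by Definition \ref{def cover} and Remark \ref{Relation faces and subsets}, $\mathbf{F}(c) \supseteq \mathbf{F}(I) \supseteq \mathbf{F}(J)$, so $D \in \mathbf{F}(c) \subseteq \mathfrak{E}$. The only remaining case is $D$ in the interior of $\mathfrak{M}$, i.e. $D.c > 0$ for all $c \in \mathcal{C}$; I would handle this by a perturbation/limiting argument: any such $D$ can be written as a limit of points that still lie in $\mathfrak{E}$ using the nef classes supplied by the covering, or more directly by noting $\mathfrak{M}$ is the closure of the union of its proper faces together with... — actually the cleanest route is to observe that $\text{Eff}(X)$ is full-dimensional and pointed, so any interior point $D$ of $\mathfrak{M}$ is a positive combination of points on the boundary faces $\mathbf{F}(c)$ (each contained in $\mathfrak{E}$), hence lies in $\mathfrak{E}$; one must just check $\mathfrak{M}$ is not all of $N^1(X)$, which holds since $\mathcal{C} \neq \emptyset$ and $\mathfrak{E}$ is pointed.

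The main obstacle I anticipate is the interior/boundary dichotomy — making rigorous that covering the nef-minimal subsets really suffices to catch \emph{every} face of $\mathfrak{M}$ meeting $\mathfrak{E}$'s complement, and that interior points cause no trouble. The resolution hinges on two facts already in place: Remark \ref{Relation faces and subsets}, which shows $\mathbf{F}(I) = \mathbf{F}(c)$ whenever $c$ is a strictly-positive combination of a nef-minimal $I$, so knowing $\mathbf{F}(c) \subseteq \mathfrak{E}$ for a covering family propagates to \emph{all} faces $\mathbf{F}(J)$ with $J$ containing a nef-minimal subset; and the pointedness of $\mathfrak{E}$, which forces $\mathfrak{M}$ to be a proper cone whose every point is a nonnegative combination of boundary points. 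Assembling these, $\mathfrak{M} = \bigcup_{I \text{ nef-minimal}} \mathbf{F}(I) \cup (\text{lower-dimensional or interior pieces handled by convexity}) \subseteq \mathfrak{E}$, which gives $\text{Eff}(X) \subseteq \mathfrak{E} + \mathfrak{M} = \mathfrak{E}$, and since $\mathfrak{E} \subseteq \text{Eff}(X)$ trivially, we conclude $\mathfrak{E} = \text{Eff}(X)$.
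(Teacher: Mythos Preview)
Your argument has a genuine gap at the step where you claim that for a boundary point $D \in \mathfrak{M}$ with $J = \{c \in \mathcal{C} : D.c = 0\}$ nonempty, the set $\mathbf{I}(\mathbf{F}(J))$ (or $J$) generates a nef curve and hence contains a nef-minimal subset. The parenthetical justification you give, ``any $D' \in \mathbf{F}(J)$ pairs nonnegatively with all of $\mathcal{C}$ and zero with $J$,'' is a statement about \emph{divisors} in $\mathbf{F}(J)$, not about nonnegative combinations of the \emph{curves} in $J$; it does nothing to show such a combination is nef. In fact the claim is false in general: the facets of $\mathfrak{M}$ are exactly the hyperplane sections $\mathbf{F}(\{c\})$ for $c \in \mathcal{C}$, and a singleton $\{c\}$ never generates a nef curve because $c$ is a negative curve. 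So your face-by-face decomposition cannot cover the codimension-one boundary of $\mathfrak{M}$, and the ``interior pieces handled by convexity'' step then has nothing to stand on, since you would need the full boundary of $\mathfrak{M}$, not just the faces $\mathbf{F}(I)$ for nef-minimal $I$, to lie in $\mathfrak{E}$.

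The paper's proof supplies exactly the missing idea: rather than trying to cover all boundary faces of $\mathfrak{M}$, it works with the dual cone $(\mathfrak{E}+\mathfrak{M})^* = \mathfrak{E}^* \cap \mathfrak{M}^*$ and observes that every nonzero element of it is a nef curve generated by $\mathcal{C}$. If $\mathfrak{M} \not\subseteq \mathfrak{E}$, one chooses an extreme ray $v$ of $\mathfrak{M}$ that is still extreme in $\mathfrak{E}+\mathfrak{M}$ and not in $\mathfrak{E}$; duality then forces $v^\perp$ to cut out a nontrivial facet of $(\mathfrak{E}+\mathfrak{M})^*$, producing a nonzero nef curve $c = \sum a_i c_i$ with $v.c = 0$. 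Since $v \in \mathfrak{M}$ gives $v.c_i \geq 0$ for all $i$, each $c_i$ with $a_i > 0$ lies in $J$, so $J$ \emph{does} generate a nef curve in this case and hence contains a nef-minimal subset. The crucial point you are missing is that this only works because $v \notin \mathfrak{E}$; for extreme rays of $\mathfrak{M}$ already in $\mathfrak{E}$ there is no reason for $J$ to generate anything nef, and indeed you never needed to handle those rays at all.
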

\begin{proof}
Let $(\mathfrak{E}+\mathfrak{M})^*\subset N_1(X)$ be the dual of the convex hull of $\mathfrak{E}\cup\mathfrak{M}$.  By the definitions of $\mathfrak{E}$ and $\mathfrak{M}$, we see that $(\mathfrak{E}+\mathfrak{M})^*$ has the following explicit description:
\begin{align*}
(\mathfrak{E}+\mathfrak{M})^*=\big\{  c\in N_1(X) | & D.c \geq 0 \text{ for all } D\in\mathcal{D} \text{ and s.t. }\\
& c=\sum a_i c_i \text{ for some } c_i\in \mathcal{C} \text{ and } a_i\geq 0\big\}.
\end{align*}
By duality, $\mathfrak{E}^*\supseteq (\mathfrak{E}+\mathfrak{M})^*$.  Showing $\mathfrak{M}\subset\mathfrak{E}$ is equivalent to showing $\mathfrak{E}^*\subseteq (\mathfrak{E}+\mathfrak{M})^*$.  Were $\mathfrak{M}\nsubseteq\mathfrak{E}$, then some extreme ray $v$ of $\mathfrak{M}$, not contained in $\mathfrak{E}$, would need to define a facet of $(\mathfrak{E}+\mathfrak{M})^*$, and therefore pair to $0$ with some nonzero element of $(\mathfrak{E}+\mathfrak{M})^*$.  Note that every curve in $(\mathfrak{E}+\mathfrak{M})^*$ is a nef curve generated by $\mathcal{C}$.

Suppose $v\in \mathfrak{M}$ and $c\in (\mathfrak{E}+\mathfrak{M})^*$ satisfy $v.c=0$.  Since $c$ is a nef curve generated by $\mathcal{C}$, $c=\sum_{c_i\in I} a_i c_i$ (with $a_i > 0$) for some $I\subseteq \mathcal{C}$.  As in Remark \ref{Relation faces and subsets}, note that $v\in\mathfrak{M}$ and $v.c = 0$ imply $v.c_i = 0$ for all $i$, and consequently $v\in\mathbf{F}(I)$.  Since $I$ generates a nef curve, some subset $I'\subseteq I$ is nef-minimal. As $\mathbf{F}:\{\text{subset of }\mathcal{C}\}\rightarrow \{\text{faces of }\mathfrak{M}\}$ reverses inclusion, $v\in \mathbf{F}(I')$, which proves the our claim.
\end{proof}
This motivates the study of nef-minimal subsets of $\mathcal{C}$.  Remark \ref{matrixAlg} gives an explicit description of when $I\subset\mathcal{C}$ generates a nef curve, but checking all subsets of $\mathcal{C}$ is exponentially tedious.  To mitigate this, Lemma \ref{propertiesNefMinimal} remarks on the structure of nef-minimal subsets, and Lemma \ref{eliminate} describes criteria that narrow our search for nef-minimal subsets.  First, we need a few more definitions.

\begin{defn}
For $c\in \mathcal{C}$, let $\mathbf{N}(c)\in \mathcal{D}$ be the divisor swept out by $c$ and $\mathbf{P}(c)=\{ D\in \mathcal{D} | c.D > 0\}$.  For a divisor $D\in \mathcal{D}$, define $\mathbf{N}(D)=\{ c\in \mathcal{C} | c.D < 0\}$ and $\mathbf{P}(D)=\{ c\in \mathcal{C} | c.D > 0\}$.  Extend $\mathbf{N}(\cdot)$ and $\mathbf{P}(\cdot)$ to subsets of $\mathcal{C}$ and $\mathcal{D}$ by taking unions.
\end{defn}


\begin{defn}\textit{The Directed Graph $\mathbf{G}(I)$ associated to $I\subset\mathcal{C}$}:
$\mathbf{G}(I)$ has vertex set $\mathbf{N}(I)$ and a directed edge from $D$ to $D'$ if $\mathbf{N}(D)\cap \mathbf{P}(D') \cap I \neq \emptyset $.
\end{defn} 
\begin{figure}[H]\label{image directed graph}
\includegraphics[width=0.32\linewidth]{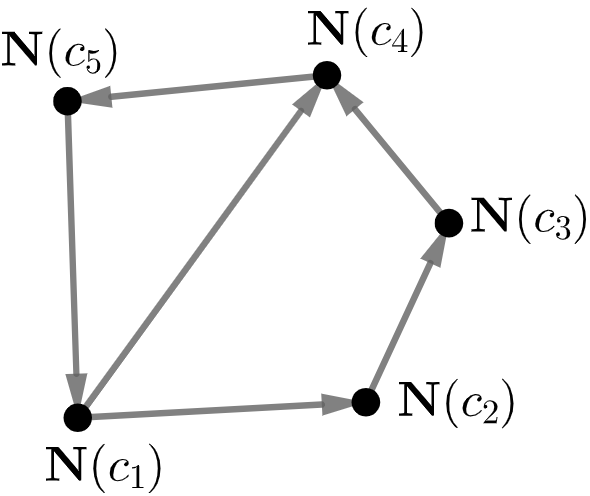}
\caption{An example of $\mathbf{G}(I)$.}
\end{figure}


These graphs are useful visual tools (see Figure \ref{image directed graph}) in the study of nef-minimal subsets $I\subset\mathcal{C}$, particularly when $\mathcal{C}$ is unimodal, as defined below.
\begin{defn}
A collection of negative curves $\mathcal{C}$ on a variety $X$ is \textit{unimodal} if, for every curve $c\in\mathcal{C}$ and every divisor $D\in\mathbf{N}(\mathcal{C})\setminus\{\mathbf{N}(c)\}$, either $D.c = 0$ or $D.c = -\mathbf{N}(c).c$.
\end{defn}

\begin{lem}\label{propertiesNefMinimal}
Every nef-minimal $I\subset\mathcal{C}$ satisfies the following properties:
\begin{enumerate}
\item No two curves in $I$ sweep out the same divisor.  In other words, if $a,b\in I$, then $\mathbf{N}(a)\neq \mathbf{N}(b)$. 
\item Every vertex $v\in \mathbf{G}(I)$ is the source and target of a directed edge, i.e. $I\subseteq \mathbf{N}(\mathbf{P}(I))\cap \mathbf{P}(\mathbf{N}(I))$.  If a proper subset $V$ of the vertices in $\mathbf{G}(I)$ are connected by a directed cycle, then the quotient of $\mathbf{G}(I)$ identifying all vertices in $V$ also satisfies this property.
\item For any two vertices $v_1, v_2 \in \mathbf{G}(I)$, there is a directed path from $v_1$ to $v_2$.  If $\mathcal{C}$ is unimodal, $\mathbf{G}(I)$ contains exactly one directed cycle, which must be Hamiltonian (this implies $I$ generates a nef curve).
\end{enumerate}
\end{lem}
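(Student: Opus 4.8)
The plan is to prove all three parts by contradiction with nef-minimality, using two preliminary facts drawn from Definition \ref{def nef minimal}. \emph{Fact A:} a nef-minimal $I$ generates a nonzero nef curve $v_I=\sum_{c_i\in I}\lambda_i c_i$ with every $\lambda_i>0$ (a vanishing coefficient would exhibit a proper subset generating $v_I$), and the classes in $I$ admit no nontrivial nonnegative relation (given $\sum\nu_i c_i=0$ with $\nu\geq 0$ not identically $0$, subtracting a suitable positive multiple of this relation from $v_I$ zeroes out a coefficient while keeping $v_I$, again exhibiting a proper generating subset); in particular the natural map $\mathbb{R}^{I}_{\geq 0}\to N_1(X)$ is injective. \emph{Fact B:} since $c\in\mathcal{C}$ moves in a family sweeping out $\mathbf{N}(c)$, a general member of that family lies in $\mathbf{N}(c)$ and in no other irreducible divisor, so $c.D\geq 0$ for every $D\neq\mathbf{N}(c)$; hence a nonnegative combination $\sum_i\lambda_i c_i$ of curves in $\mathcal{C}$ is nef iff $\sum_i\lambda_i(c_i.D)\geq 0$ for every $D\in\mathbf{N}(\{c_i:\lambda_i>0\})$, every other pairing being automatically nonnegative. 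Fact B is exactly what makes the matrix test of Remark \ref{matrixAlg} correct, and it reduces each of our questions to the finite linear system indexed by $\mathbf{N}(I)$.

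For the first sentence of part (2) and the first sentence of part (3) I would prove the single statement that for every nonempty proper $V\subsetneq\mathbf{N}(I)$, the graph $\mathbf{G}(I)$ has an edge from $V$ to its complement and one from the complement to $V$. Suppose there were no edge from $\mathbf{N}(I)\setminus V$ into $V$. Then each $c_i\in I$ with $\mathbf{N}(c_i)\notin V$ has $c_i.D\leq 0$ for all $D\in V$, hence $c_i.D=0$ by Fact B. The partial sum $c_V=\sum_{\mathbf{N}(c_i)\in V}\lambda_i c_i$ of $v_I$ then pairs with every $D\in V$ exactly as $v_I$ does (nonnegatively, the omitted curves contributing $0$) and with every $D\notin V$ nonnegatively (Fact B), so $c_V$ is nef, supported on a proper subset of $I$, and nonzero by Fact A -- contradicting nef-minimality; the reverse edge is symmetric. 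Taking $V=\{D\}$ yields that every vertex of $\mathbf{G}(I)$ is the source and target of an edge, i.e. $I\subseteq\mathbf{N}(\mathbf{P}(I))\cap\mathbf{P}(\mathbf{N}(I))$. Strong connectivity follows formally: the set of vertices reachable from a fixed vertex is nonempty and has no outgoing edge, hence is all of $\mathbf{N}(I)$. The second sentence of part (2) then follows because collapsing a directed cycle in a strongly connected digraph leaves it strongly connected, so again every vertex is a source and a target.

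For the second sentence of part (3), assume $\mathcal{C}$ is unimodal and rescale each $c\in\mathcal{C}$ so that $c.\mathbf{N}(c)=-1$; then $c.D\in\{0,1\}$ for $D\neq\mathbf{N}(c)$, and by part (1) the matrix $[(\mathbf{N}(c_i).c_j)]_{c_i,c_j\in I}$ has $-1$ on the diagonal and, off the diagonal, the transpose of the adjacency matrix of $\mathbf{G}(I)$. For any directed cycle $\gamma$ in $\mathbf{G}(I)$, the class $s_\gamma=\sum_{\mathbf{N}(c_i)\in V(\gamma)}c_i$ pairs nonnegatively with every $D\in\mathbf{N}(I)$: with $D\notin V(\gamma)$ by Fact B, and with $D\in V(\gamma)$ the pairing is $-1$ plus the number of summands pairing positively with $D$, which is at least $1$ since the $\gamma$-predecessor of $D$ contributes. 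Thus $s_\gamma$ is a nef curve generated by a subset of $I$; by Fact A it is nonzero (else subtract it off $v_I$ to reach a proper generating subset), so by nef-minimality that subset is all of $I$ and $\gamma$ is Hamiltonian -- which in particular shows $I$ generates a nef curve. Since $\mathbf{G}(I)$ is strongly connected with at least two vertices it contains a cycle $\gamma$, necessarily Hamiltonian; if $\mathbf{G}(I)$ had an edge $u\to v$ off $\gamma$, then the $\gamma$-path from $v$ to $u$ would miss a vertex (it cannot visit all, or $u\to v$ would lie on $\gamma$), and appending $u\to v$ would give a non-Hamiltonian cycle -- impossible. Hence $\mathbf{G}(I)$ is exactly one Hamiltonian cycle.

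Part (1) is the step I expect to be hardest. Assume $a,b\in I$ with $\mathbf{N}(a)=\mathbf{N}(b)=D$, and put $\alpha_a=-a.D>0$, $\alpha_b=-b.D>0$. A rank count using Fact A first reduces to a normal form: since the system indexed by $\mathbf{N}(I)$ has a strictly positive solution (Fact A) on whose extreme ray $|I|-1$ of its rows vanish independently, while there are only $|\mathbf{N}(I)|$ rows, we get $|\mathbf{N}(I)|\geq |I|-1$; so exactly two curves $a,b$ sweep $D$, the map $c_i\mapsto\mathbf{N}(c_i)$ is injective on the rest, the cone of nef combinations is the single ray through $v_I$, and $v_I.D'=0$ for every $D'\in\mathbf{N}(I)$. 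Now transfer weight between $a$ and $b$: for $r=\sum_{c_i\in I\setminus\{a,b\}}\lambda_i c_i$ and $L=\alpha_a\lambda_a+\alpha_b\lambda_b$, the family $v(s)=s\,(a/\alpha_a)+(L-s)(b/\alpha_b)+r$ has $v(s).D=v_I.D\geq 0$ for all $s$, is nef at $s=\alpha_a\lambda_a$ (it equals $v_I$ there), has $v(0)$ supported on $I\setminus\{a\}$ and $v(L)$ on $I\setminus\{b\}$, and satisfies $v(s).D'=(s/L)f_a(D')+(1-s/L)f_b(D')$ for $D'\neq D$, where $f_a(D')=v(L).D'$, $f_b(D')=v(0).D'$. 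Fact B and the positivity of $v_I$ force $\max(f_a(D'),f_b(D'))\geq 0$ for each $D'$, so if either $f_a\geq 0$ everywhere or $f_b\geq 0$ everywhere we get a nonzero nef curve on a proper subset (Fact A), a contradiction. The remaining case -- $f_a<0$ at some $D_1$ and $f_b<0$ at some $D_2$ -- forces $\alpha_b(a.D_1)<\alpha_a(b.D_1)$ and $\alpha_a(b.D_2)<\alpha_b(a.D_2)$; when $D_1=D_2$ these are directly contradictory (this already settles $|I|=3$), and ruling out $D_1\neq D_2$ in general -- by an induction on $|I|$, peeling off the curve sweeping the divisor where the transfer first gets obstructed -- is the technical core of the whole lemma.
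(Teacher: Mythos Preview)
Your arguments for parts (2) and (3) are correct and considerably more detailed than the paper's, which simply declares them ``straightforward to verify.''  Your strong-connectivity lemma (no proper $V$ lacks edges in either direction) and your explicit nef class $s_\gamma$ in the unimodal case are clean and complete.  One small point: the claim in Fact~A that $\mathbb{R}^I_{\geq 0}\to N_1(X)$ is injective is equivalent to linear independence of the $c_i$, which does \emph{not} follow from the absence of nonnegative relations alone.  It does follow, though, by the same trick applied to a signed relation $\sum\mu_i c_i=0$: writing $v_I=\sum(\lambda_i+t\mu_i)c_i$ and sliding $t$ until a coefficient vanishes again exhibits a proper generating subset.

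The genuine gap is in part (1), which you yourself flag as unfinished.  Your weight-transfer $v(s)$ moves mass only between $a$ and $b$ while freezing the other coefficients; this is precisely why the obstruction at $D_1\neq D_2$ arises and why an induction looks necessary.  The paper avoids this entirely by allowing the \emph{other} coefficients to move as well.  Its argument (using your own rank count) is: under the contradiction hypothesis one has $|\mathbf{N}(I)|=|I|-1$, and at an extreme ray of the nef cone inside $\mathbb{R}^I_{>0}$ every one of the $|I|-1$ divisor constraints must vanish, so there is a nef $c=\sum a_i c_i$ with $c.D'=0$ for all $D'\in\mathbf{N}(I)\setminus\{D\}$.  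Now split $c=c_x+c_y$ where $c_x=a_1c_1+\sum_{i\geq 3}x_ic_i$, $c_y=a_2c_2+\sum_{i\geq 3}y_ic_i$, $x_i+y_i=a_i$, choosing the $x_i$ so that $c_x$ (hence $c_y$) still pairs to $0$ with every $D'\neq D$.  Since $c_x.D+c_y.D=c.D\geq 0$, one of $c_x,c_y$ is nef and supported on a proper subset.  The extra freedom in the $x_i$ is exactly what your one-parameter family $v(s)$ lacks; once you allow it, no induction is needed.  (Incidentally, this reverses your difficulty assessment: the paper treats (1) as the only part with content and dismisses (2)--(3) in a sentence.)
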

\begin{proof}
Property (1): if $I$ is nef-minimal, then for any $D\in\mathbf{N}(I)$, $I$ generates a nef curve $c$ which pairs to 0 with all $D'\in\mathbf{N}(I)\setminus\{D\}$.  So, if two curves $c_1,c_2\in I$ sweep out $D$, we may split up $c=\sum a_i c_i$ into $c_x= a_1 c_1 + \sum_{i\geq 3} x_i c_i$ and $c_y= a_2 c_2 + \sum_{i\geq 3} y_i c_i$, with $x_i + y_i = a_i$, such that $c_x$ and $c_y$ also pair to $0$ with all $D'\in\mathbf{N}(I)\setminus\{D\}$.  At least one of $c_x$ and $c_y$ need to be nef.

Properties (2) and (3) are straightforward to verify.
\end{proof}

Let $S$ be a collection of nef curves generated by $\mathcal{C}$.  To check whether $S$ covers all nef-minimal subsets of $\mathcal{C}$, it is not necesary to find and check all nef-minimal $I\subset\mathcal{C}$.  We introduce the notion of \textit{elimination relative} $S$ for $I\subset \mathcal{C}$ to formalize which nef-minimal subsets we may skip finding.

\begin{defn}
Let $S$ be as above.  We say that $I\subset\mathcal{C}$ may be \textit{eliminated} (relative $S$) if we can show that for any nef-minimal $I'\supseteq I$, either $I'$ is covered by $S$, or there exists a nef-generating $I''\nsupseteq I$ with $\mathbf{F}(I'')\supseteq\mathbf{F}(I')$. 
We say that every nef-generating subset of $I''$ \textit{eliminates} every set containing $I'$.  If $\mathbf{F}(I'') = \mathbf{F}(I')$, we require that $I''$ was not previously eliminated by $I'$.
\end{defn}

\begin{lem}\label{eliminate}
 The following criteria allow us to eliminate $I$ relative $S$:
\begin{enumerate}

\item If 2 curves in $I$ sweep out the same divisor, 
or $\mathbf{F}(I)\subset \mathbf{F}(c)$ for some $c\in S$, we may eliminate $I$.


%
%
\item  Fix $c\in I$ 
and $\alpha$ generated by $I\setminus\{c\}$ s.t. 
$\mathbf{N}(c').(c+\alpha) \geq 0$ for all $c'\in I\setminus \{c\}$.  
 Suppose $I_0\subset\mathcal{C}\setminus\{c\}$ also generates $c+\alpha$.  
Let $G$ be a directed graph with vertex set $\mathcal{C}$.  For each previous subset $I' \ni c'$ eliminated by $I_0 '$ using this criteria ($c'$ corresponding to $c\in I$), add edges to $G$ from $c'$ to all curves in $I_0 '\setminus I'$.  We may eliminate $I$ if adding edges from $c$ to all curves in $I_0\setminus I$ does not create a directed cycle in $G$.
%
%
\item Let $D\in \mathbf{N}(\mathbf{I}(\mathbf{F}(I)))\setminus\mathbf{N}(I)$.  If for every choice of $c\in\mathbf{N}(D)$ the set $I\cup \{c\}$ has been eliminated (not by criteria (2) applied to $c\in I\cup\{c\}$ and $c+\alpha$), we may eliminate $I$ as well.\footnote{If, for every choice of $c=c_i\in \mathbf{N}(D)\cap \mathbf{I}(\mathbf{F}(I))$, $I\cup\{c_i\}$ is eliminated by criteria (2) with $c'=c_i'\in I$, then we also require the existence of some $c_i$ such that the addition of an edge from $c_i'$ to $c_i$ in the directed graph $G$ above does not create any directed cycles.}   
\end{enumerate}
\end{lem}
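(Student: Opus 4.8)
The plan is to show, for each of the three criteria, that $I$ satisfies the defining property of elimination: for every nef-minimal $I'\supseteq I$, either some $c\in S$ covers $I'$, or there is a nef-generating $I''\nsupseteq I$ with $\mathbf{F}(I'')\supseteq\mathbf{F}(I')$. Throughout I would use that $\mathbf{F}$ and $\mathbf{I}$ reverse inclusions (Remark \ref{Relation faces and subsets}) and Lemma \ref{propertiesNefMinimal}(1), and for the recursive criteria I would invoke the acyclicity of the graph $G$ to treat the list of previously eliminated sets as a well-founded poset, so that iterating the replacement step terminates.

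Criterion (1) is immediate: if two curves of $I$ sweep out the same divisor, Lemma \ref{propertiesNefMinimal}(1) shows no nef-minimal set contains $I$, so there is nothing to prove; and if $\mathbf{F}(I)\subseteq\mathbf{F}(c)$ for some $c\in S$, then $\mathbf{F}(I')\subseteq\mathbf{F}(I)\subseteq\mathbf{F}(c)$ for every $I'\supseteq I$, so $c$ covers $I'$ by Definition \ref{def cover}. For criterion (2), given nef-minimal $I'\supseteq I$ I would take $I'':=(I'\setminus\{c\})\cup I_0$. That $c\notin I''$ (hence $I''\nsupseteq I$) is clear since $c\in I$ and $I_0\subseteq\mathcal{C}\setminus\{c\}$. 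For $\mathbf{F}(I'')\supseteq\mathbf{F}(I')$: on $\mathbf{F}(I')$ every curve of $I'\setminus\{c\}$, hence $\alpha$, and $c$ pair to $0$, so $c+\alpha$ pairs to $0$ there; since $c+\alpha$ is a strictly positive combination of curves in $I_0$, each of which pairs nonnegatively with every element of $\mathfrak{M}$, each such curve must pair to $0$ on $\mathbf{F}(I')$, so $I_0\subseteq\mathbf{I}(\mathbf{F}(I'))$. The remaining point — that $I''$ is nef-generating — is where I expect the real work: starting from a nef curve $v$ generated by $I'$ with all positive coefficients and the substitution $c=\sum_{c_i\in I_0}b_ic_i-\alpha$, I would rewrite $v$ as $a_c\sum_{c_i\in I_0}b_ic_i$ plus a combination of curves in $I'\setminus\{c\}$, where $a_c$ is the coefficient of $c$ in $v$, and use the hypothesis $\mathbf{N}(c').(c+\alpha)\geq 0$ for $c'\in I\setminus\{c\}$ to guarantee that the second term can be arranged with nonnegative coefficients for a suitable choice of $v$; then $v$ is generated by $I''$. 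Finally, the acyclicity condition on $G$ ensures that this replacement, applied recursively through the clause about nef-generating subsets of $I''$, does not loop.

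For criterion (3), given nef-minimal $I'\supseteq I$ I would split into cases according to whether $I'$ contains a curve of $\mathbf{N}(D)$. If it does, say $c\in I'\cap\mathbf{N}(D)$, then $I'\supseteq I\cup\{c\}$, so the elimination of $I\cup\{c\}$ supplies either a covering $c^\ast\in S$ or a nef-generating $J\nsupseteq I\cup\{c\}$ with $\mathbf{F}(J)\supseteq\mathbf{F}(I')$; inspecting the three ways $I\cup\{c\}$ could have been eliminated then shows $J\nsupseteq I$ as well — for criterion (2) this uses that the removed curve lies in $I$ and is not $c$, which the parenthetical restriction enforces, while for criterion (3) it closes by induction using the footnote's acyclicity condition. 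If $I'$ contains no curve of $\mathbf{N}(D)$, then $D\notin\mathbf{N}(I')$, yet any $c_0\in\mathbf{N}(D)\cap\mathbf{I}(\mathbf{F}(I))$ lies in $\mathbf{I}(\mathbf{F}(I'))$ because $\mathbf{F}(I')\subseteq\mathbf{F}(I)$; I would use this, together with the resulting equality $\mathbf{F}(I'\cup\{c_0\})=\mathbf{F}(I')$, to reduce once more to an already-eliminated set containing $c_0$.

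The step I expect to be the main obstacle is the nef-generating claim in criterion (2): making precise why the family of inequalities $\mathbf{N}(c').(c+\alpha)\geq 0$ forces the existence of an admissible nef curve on $I'$ whose $c$-coefficient is small enough for the rewriting to have nonnegative coefficients throughout. The secondary difficulty is the bookkeeping in criterion (3) — certifying that the replacement inherited from the elimination of $I\cup\{c\}$ genuinely avoids $I$, which is exactly the purpose of both the parenthetical restriction and the footnote's acyclicity condition.
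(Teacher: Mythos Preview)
Your proposal is correct and follows essentially the same route as the paper. For the step you flag as the main obstacle---showing $I''=(I'\setminus\{c\})\cup I_0$ is nef-generating in criterion~(2)---the paper's device is to first enlarge $\alpha$ by positive multiples of the curves in $I\setminus\{c\}$ until each inequality $\mathbf{N}(c').(c+\alpha)\geq 0$ becomes an equality (this is possible, and the resulting $\alpha$ is unique, precisely because $I\setminus\{c\}$ does not itself generate a nef curve and the divisors $\mathbf{N}(c')$ are distinct); then, taking a nef curve $\beta$ generated by $I'$ with $c$-coefficient $1$ and normalizing it the same way, uniqueness forces $\beta-(c+\alpha)$ to be a nonnegative combination over $I'\setminus\{c\}$, so $\beta$ is generated by $I''$. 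Your treatment of criterion~(3) is slightly more explicit than the paper's (which dispatches your Case~a with ``otherwise the criteria is automatic''), but the content is the same.
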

\begin{proof} 

Criteria (1) follows from Lemma \ref{propertiesNefMinimal} (1).  

Criteria (2): Let $I$, $I_0$, and $c+\alpha$ be as described.  If $I$ contains curves sweeping out the same divisor, by Lemma \ref{propertiesNefMinimal}(1) there are no nef-minimal $I\supset I$.  Likewise, if $I\setminus\{c\}$ generates a nef curve, no nef-minimal $I'\supset I$ exist.  Hence, we may replace $I_0$ with $I_0\cup I\setminus \{c\}$ and modify $\alpha$ by positive multiples of $c'\in I\setminus \{c\}$ until $\mathbf{N}(c').(c+\alpha) = 0$ for all $c'\in I\setminus \{c\}$.  Such an $\alpha$ is unique.

Suppose $I'\supset I$ is nef-minimal, and let $\beta =c+\sum_{c_i \in I'\setminus\{c\}} a_i c_i$ be a nef curve generated by $I'$.  By adding multiples of curves in $I\setminus\{c\}$, may assume $\mathbf{N}(c').\beta = 0$ for all $c'\in I\setminus\{c\}$.  By unicity of $\alpha$, $\beta -(c+\alpha)$ is a curve generated by $I'\setminus \{c\}$.  Therefore, $I'' := I_0\cup I'\setminus \{c\}$ generates the nef curve $\beta$ as well.  We may assume $I_0\subset \mathbf{I}(\mathbf{F}(I))$, as otherwise we may discard elements from $I''$ and retain a set generating $c+\alpha$.  Hence, $\mathbf{F}(I'')\supseteq \mathbf{F}(I')$, which proves the claim.  The condition stated in the lemma ensures that we do not add $c$ back to $I''$ in a cyclic fashion upon iteration of this criteria.


Criteria (3): Let $D$ and $I$ be as described, and suppose $I'\supset I$ is nef-minimal.  We may assume $I'$ does not contain a curve sweeping out $D$, as otherwise the criteria is automatic.   We need to show the existence of some $I''\nsupseteq I$, generating a nef curve, with $\mathbf{F}(I')\subseteq\mathbf{F}(I'')$.  Pick $c\in \mathbf{N}(D)\cap\mathbf{I}(\mathbf{F}(I))$, which is nonempty because $D\in\mathbf{N}(\mathbf{I}(\mathbf{F}(I)))$.  Clearly $\mathbf{F}(I')=\mathbf{F}(I'\cup\{c\})$, and $I'\cup\{c\}$ generates a nef curve but has been eliminated by one of the prior criteria applied to $I\cup\{c\}$.  If $I\cup\{c\}$ is eliminated by criteria (1), 
so is $I$ because $\mathbf{F}(I)=\mathbf{F}(I\cup\{c\})$.  Otherwise, criteria (2) applied to $c'\in I$ eliminates $I\cup\{c\}$.  
Thus, we may find a nef-generating $I''\not\ni c'$ with $\mathbf{F}(I'') \supseteq \mathbf{F}(I' \cup \{c\}) = \mathbf{F}(I')$.  If, for every choice of $c=c_i\in \mathbf{N}(D)\cap\mathbf{I}(\mathbf{F}(I))$, $I\cup\{c_i\}$ is not eliminated by criteria (1) 
but by criteria (2) for some $c_i'\in I$, then to avoid adding $c_i'$ back to $I''$ in a cyclic fashion, we must ensure the graph $G$ mentioned above contains no directed cycles after the addition of an edge from $c_i'$ to $c_i$ for some choice of $c_i$.  This is because, rather than eliminating $I\cup \{c_i\}$ with criteria (2), we effectively eliminate $I$.
\end{proof}

Lemmas \ref{propertiesNefMinimal} and \ref{eliminate} outline a procedure for efficiently finding nef-minimal subsets $I\subset \mathcal{C}$ not covered by a collection of nef curves $S$.  This serves as the basis for our algorithm below. 

\begin{alg}[\textit{Proving $\mathfrak{E} = \text{Eff}(X)$ using Negative Curves}]\label{negCurvesAlg}
Given a collection of nef curves $S$, we first check that it covers all nef-minimal $I\subset \mathcal{C}$.  This follows if we may eliminate every nef-minimal $I$ relative $S$.  To determine this, begin with a one element subset $I=\{c\}\subset\mathcal{C}$, and iteratively extend $I$ by curves $c'$ such that $\mathbf{N}(c')$ pairs positively with the most recently added curve.  Graphically, we extend the vertex set of $\mathbf{G}(I)$ one vertex at a time until a directed cycle appears.  
\begin{figure}[ht]
\includegraphics[width=0.7\linewidth]{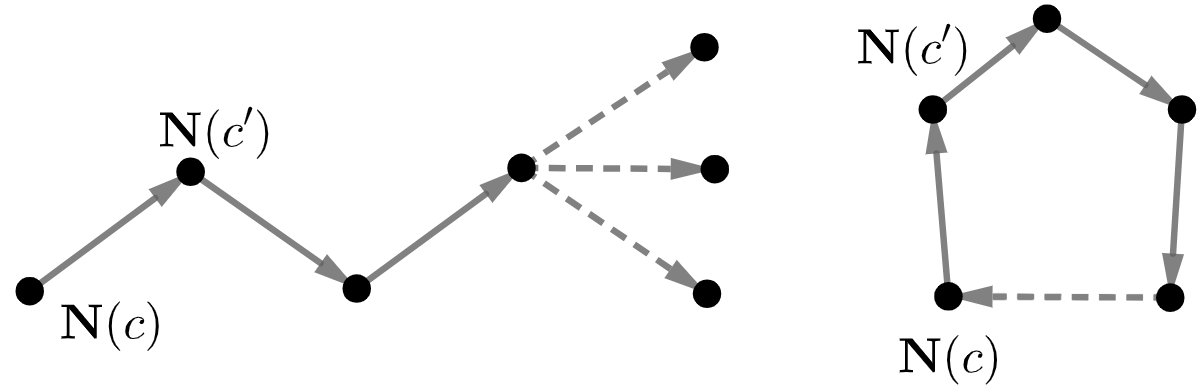}
\caption{Extending $I$ until a direct cycle appears in $\mathbf{G}(I)$.}
\end{figure}
If $I$ still does not generate a nef curve, add a curve $c'$ with $\mathbf{N}(c')\in\mathbf{P}(I)$, and continue.  We may stop extending $I$ if it has been eliminated.  Doing this in all possible ways, for all possible one element subsets of $\mathcal{C}$, will generate all uneliminated nef-minimal subsets of $\mathcal{C}$.

Suppose a collection of nef curves $S$ covers all nef-minimal $I\subset \mathcal{C}$.  To determine whether $\mathfrak{M}\subset \mathfrak{E}$ (and thus that $\mathfrak{E} = \text{Eff}(X)$), by Proposition \ref{nefCovers} we must show that for each $c\in S$, $\mathbf{F}(c)\subset \mathfrak{E}$.  Given $F=\mathbf{F}(c)$ for $c\in S$, let $L$ be the linear subspace of $N^1(X)$ defined by the vanishing of all curves in $\mathbf{I}(F)$.  
\begin{figure}[ht]
\includegraphics[width=0.23\linewidth]{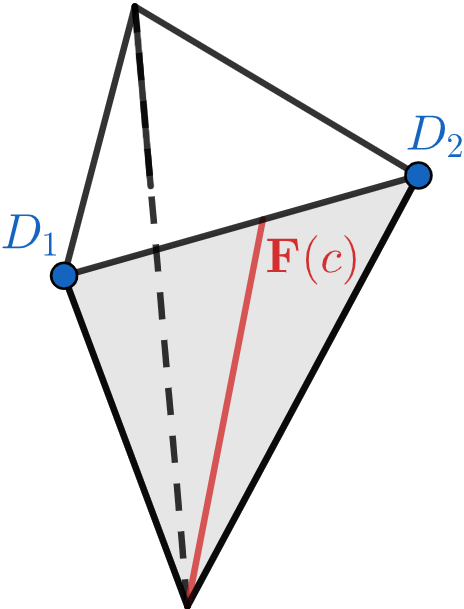}
\caption{An example when $\rho(X)=3$, with $\mathfrak{E}$ outlined in black, $\mathbf{F}(c)$ outlined in red, $D_1$, $D_2$ drawn in blue, and $\mathfrak{E}\cap L$ shaded darker.  
$\mathbf{F}(c)$ always has codimension at least 2 in $N^1(X)$.} 
\end{figure}
To show that $F\subseteq\mathfrak{E}$, it suffices to find elements $D_1,\dots, D_n\in \mathfrak{E}\cap L$ and show that for arbitrary $D\in F$, if $D-\epsilon D_i \not\in F$ for all $i$ and all $\epsilon > 0$, then $D\in \mathfrak{E}$.  Of course, symmetry may reduce the number of cases necessary to consider, as happens below.
\end{alg}

\begin{ex}\label{M05}
\textit{Computing $\text{Eff}(\overline{M}_{0,5})$}: Recall $\overline{M}_{0,5}\cong \text{Bl}_{p_0, p_1, p_2, p_3}\mathbb{P}^2$ has boundary divisors $E_i$ and $H - E_i - E_j$ for $i,j\in \{0,1,2,3\}$.  Each boundary divisor is an exceptional curve, and a divisor class $$D = dH - \sum m_i E_i$$ is effective upon restriction to them if and only if it pairs nonnegatively with them.  To show boundary divisors generate $\text{Eff}(\overline{M}_{0,5})$, let $\mathcal{C}=\mathcal{D}$ be the set of all boundary divisors, and define $\mathfrak{E}$ and $\mathfrak{M}$ as before.  Then for any $D\in \mathfrak{M}$,
\begin{itemize}
\item $D.E_i \geq 0$
\item $D.(H - E_i -E_j) \geq 0$
\end{itemize}
By Proposition \ref{nefCovers}, it is sufficient to check that $D\in \mathfrak{E}$ when $D.c = 0$ for some nef curve $c$ 
generated by $\mathcal{C}$.  Two obvious nef curves are $H - E_i = (H - E_i - E_j) + E_j$ and $2H - \sum E_i = (H - E_0 - E_1) + (H - E_2 - E_3)$.  Figure \ref{M05pic} contains the graph $\mathbf{G}(I)$ of subsets $I$ generating these curves. 
\begin{figure}[ht]
\includegraphics[width=0.575\linewidth]{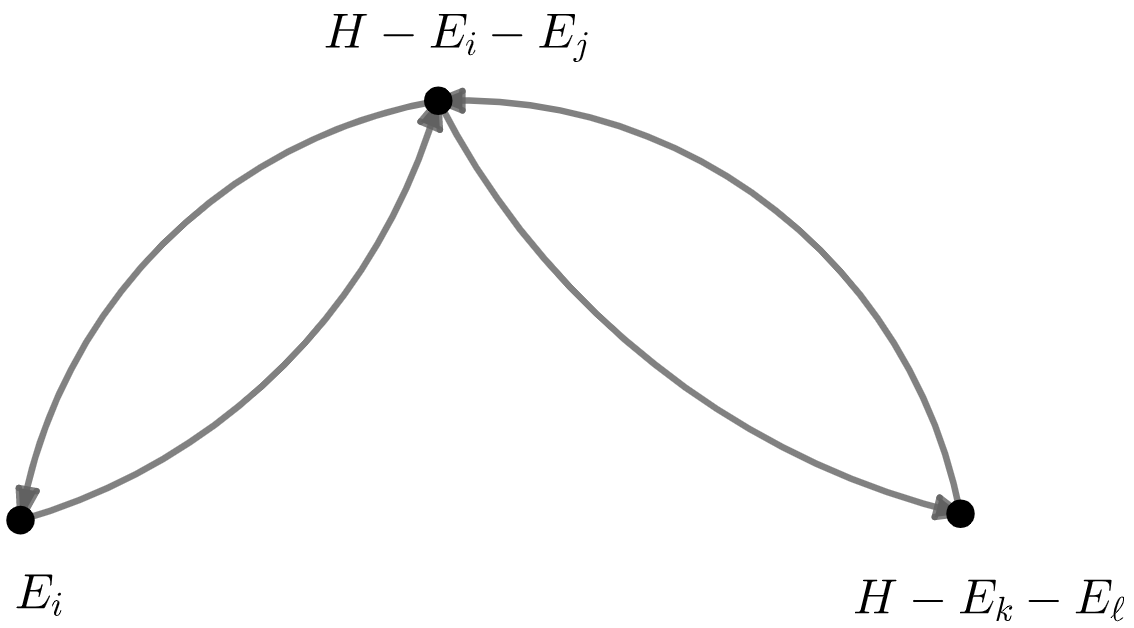}
\caption{The subset $I= \{E_i, H-E_i -E_j, H-E_k -E_\ell\}$ is not nef-minimal, but the two subsets $\{E_i, H-E_i -E_j\}, \{ H-E_i -E_j, H-E_k -E_\ell\}$ correlating the the cycles above are nef-minimal.}
\label{M05pic}
\end{figure}
Since $\mathcal{C}$ is unimodal (any two curves in $\mathcal{C}$ pair to either $0, 1, \text{ or } -1$), the nef-minimal subsets of $\mathcal{C}$ are precisely these pairs of curves.  In fact, the nef curves above are $S_5$ equivalent, so we may assume $D.(H -E_3) = 0$.  This implies $D.E_i = 0$ for $i\neq 3$, since $H - E_3 = (H - E_0 - E_3) + E_0  = (H - E_1 - E_3) + E_1  = (H - E_2 - E_3) + E_2$.  Thus,
$$D = dH - dE_3=d(H-E_0 -E_3) +d(E_0)$$
clearly belongs to $\mathfrak{E}$, and so $\mathfrak{E}=\text{Eff}(\overline{M}_{0,5})$.
\end{ex}

\begin{rem}
Algorithm \ref{negCurvesAlg} works particularly well for surfaces, as then each extreme divisor is its own negative curve, i.e. $|\mathbf{N}(D)| = 1$ for all $D\in \mathcal{D}$.
\end{rem}



\section{Proof of Theorem \ref{coneThm}}\label{Section10}

Using techniques developed in Section \ref{NegTech}, we prove the following statement: \textit{Let $n = 5,6$ and $\mathcal{B}$ be the set of boundary divisors on $\text{Eff}(\overline{M}_{0,n})$.  Suppose $D\in\text{Pic}(\overline{M}_{0,n})$ satisfies $D|_B \in \text{Eff}(B)$ for each $B\in\mathcal{B}$.  Then $D\in\text{Eff}(\overline{M}_{0,n})$.}

\begin{proof}[Proof of Theorem \ref{coneThm}]
The case $n=5$ is Example \ref{M05}.  For $n=6$, let $E_i$, $E_{ij}$, and $\Delta_{ijk}$ be the boundary divisors on $\overline{M}_{0,6}$ for $1\leq i < j < k \leq 5$. Let $KV_{ij,kh}= 2H - \sum_{\alpha} E_{\alpha} - E_{ik} -E_{ih} -E_{jk} -E_{jh}$ be the specified Keel-Vermiere divisor.  Let $e_i$ be the class of a general line in $E_i$, $e_{ij}$ the fiber over a general point in the blow-up of $l_{ij}$, and $l$ be class of the pullback of a general line in $\mathbb{P}^3$ under the Kapranov morphism.  Castravet \cite{castravet2008cox} shows the following: 
\begin{enumerate}
\item $e_i - e_{ij}$ and $2e_i - \sum_{j\neq i} e_{ij}$ are negative curve classes on $E_i$.
\item $l - e_i - e_{jk}$, $2l - e_i -e_j -e_k -e_{hl}$, and $l-  e_{ij}  -e_{ik} -e_{jk}  -e_{hl}$ are negative curve classes on $\Delta_{ijk}$.
\item $e_{ij}$ and $l -e_i -e_j +e_{ij}$ are negative curve classes on $E_{ij}$.
\end{enumerate}
A divisor $D\in \text{Pic}(\overline{M}_{0,6})$ is effective upon restriction to each boundary divisor if and only if $D$ pairs nonnegatively with each curve class listed above.  Writing 
$$D= dH -\sum_{1\leq i\leq 5} m_i E_i - \sum_{1\leq i < j \leq 5} m_{ij} E_{ij},$$
this translates to the following:
\begin{enumerate}
\item $m_i \geq m_{ij}$ for all $j\neq i$ and $2m_i \geq \sum_{j\neq i} m_{ij}$.
\item $d \geq m_i + m_{jk}$ (allowing roles of $i,j,k$ to vary), $2d \geq m_i + m_j +m_k + m_{hl}$, and $d\geq m_{ij} + m_{ik} + m_{jk} + m_{hl}$, where $\{i,j,k,h,l\} = \{1,2,3,4,5\}$
\item $m_{ij}\geq 0$ and $d+ m_{ij} \geq m_i +m_j$
\end{enumerate}
Let $\mathcal{C}$ be the collection of negative curves on boundary divisors listed above and $\mathcal{D}$ be the set of boundary and Keel-Vermiere divisors.  Identifying the minimal subsets of $\mathcal{C}$ that generate nef curves is more involved than it was for $\overline{M}_{0,5}$.  We relegate the proof of the following fact to a lemma:
\begin{lem}\label{lastLem}
There are $3$ $S_6$-equivalence classes of nef curves that cover all nef-minimal subsets of $\mathcal{C}$ on $\overline{M}_{0,6}$.  They are
\begin{enumerate}
\item $l -e_1$ ($S_6$ equivalent to $3l - \sum e_i$)
\item $l-e_{12} -e_{34}$ ($S_6$ equivalent to $2l - e_1 -e_2 -e_{34} -e_{35}$)
\item $2l -e_{12} -e_{13} -e_{14} -e_{25} -e_{35} -e_{45}$ ($S_6$ equivalent to $3l - 2e_5 - \sum_{i,j\leq 4} e_{ij}$)
\end{enumerate}
\end{lem}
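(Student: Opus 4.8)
The goal is to enumerate all nef-minimal subsets $I \subset \mathcal{C}$ up to the $S_6$-action and show each is covered by one of the three listed nef curves (or rather their $S_6$-orbits). My strategy is to run the extension procedure of Algorithm \ref{negCurvesAlg}: start from a single curve $c \in \mathcal{C}$, build the directed graph $\mathbf{G}(I)$ by adjoining curves whose swept-out divisor pairs positively with the most recently added curve, and stop whenever $I$ either generates a nef curve or can be eliminated by one of the criteria in Lemma \ref{eliminate}. Since $\mathcal{C}$ is (I will check) unimodal — every pairing $D.c$ for $D \neq \mathbf{N}(c)$ is $0$ or $-\mathbf{N}(c).c$ — Lemma \ref{propertiesNefMinimal}(3) tells us each nef-minimal $I$ has $\mathbf{G}(I)$ equal to a single Hamiltonian directed cycle, so I am really enumerating directed cycles in an auxiliary graph on $\mathcal{D}$. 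The $S_6$-symmetry (acting through $S_5$ on the Kapranov indices, together with the extra transposition swapping the roles of points and lines) cuts the casework down dramatically: I will organize cycles by the ``type'' of their vertex set — which combination of $E_i$-type, $E_{ij}$-type, and $\Delta_{ijk}$-type divisors appears — and treat one representative per orbit.

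**Key steps, in order.** First, tabulate for each $c \in \mathcal{C}$ the data $\mathbf{N}(c)$ and $\mathbf{P}(c)$ — i.e. the pairing of each of the nine curve classes with each of the $25$ boundary divisors — and verify unimodality from that table. Second, observe that the three proposed nef curves are genuinely nef (pair $\geq 0$ with all of $\mathcal{C}$) and record, for each, the set $\mathbf{I}(\mathbf{F}(c))$ of curves vanishing on its face; this tells me exactly which nef-minimal subsets each one covers. Third, run the extension/elimination search. Short cycles are easy: a $2$-cycle $\{c_1, c_2\}$ is nef-minimal iff the $2\times 2$ matrix $A_I$ reduces to have a nonnegative diagonal entry (Remark \ref{matrixAlg}), and these will all be covered by $l - e_1$ or $l - e_{12} - e_{34}$ — for instance $\{e_i - e_{ij},\, l - e_i - e_{jk}\}$-type pairs give multiples of $l - e_i$, while $\{l - e_{ij} - e_{ik} - e_{jk} - e_{hl},\, e_{ij}\}$-type pairs give the second class. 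Longer cycles — those of length $3$ through $6$ involving mixtures of all three divisor families, which is where the third nef curve $2l - e_{12} - e_{13} - e_{14} - e_{25} - e_{35} - e_{45}$ comes in — I handle with the elimination criteria: criterion (1) kills any $I$ containing two curves on the same divisor or any $I$ already inside the face of a known $S$-curve; criterion (2) reroutes an extension through an alternative generating set not containing the newest curve; criterion (3) lets me skip an entire branch once all one-step extensions past a given vertex are eliminated. I will maintain the directed graph $G$ on $\mathcal{C}$ demanded by criterion (2) to certify no cyclic re-addition occurs.

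**The main obstacle.** The delicate part is the bookkeeping in criteria (2) and (3) of Lemma \ref{eliminate}: when I eliminate a branch by rerouting through $I_0 \not\ni c$, I must record edges $c \to (I_0 \setminus I)$ in $G$ and confirm at the end that $G$ is acyclic, and similarly in criterion (3) I must exhibit a valid choice of $c_i$ avoiding cycles — getting this consistently right across all the mixed-type cycles, without accidentally arguing in a circle, is where the proof could go wrong. A secondary difficulty is making sure the $S_6$-orbit reduction is applied correctly: the two ``Kapranov models'' of $\overline{M}_{0,6}$ (points vs. lines) are exchanged by an element of $S_6$ not visible as a permutation of $\{1,\dots,5\}$, and I need this to identify, e.g., $l - e_1$ with $3l - \sum e_i$ and the sixth nef curve's two presentations; I will make this symmetry explicit at the outset so that each later claim ``this cycle is $S_6$-equivalent to one already handled'' is unambiguous. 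Once the search terminates with every branch either producing a cycle covered by $S = \{l - e_1,\ l - e_{12} - e_{34},\ 2l - e_{12} - e_{13} - e_{14} - e_{25} - e_{35} - e_{45}\}$ or eliminated, Lemma \ref{lastLem} follows; feeding this into Proposition \ref{nefCovers} then reduces Theorem \ref{coneThm} for $n = 6$ to checking $\mathbf{F}(c) \subset \mathfrak{E}$ for these three $c$, which is the short linear-algebra argument carried out after the lemma.
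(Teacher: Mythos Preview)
There is a genuine gap at the very first step of your plan: the collection $\mathcal{C}$ is \emph{not} unimodal. The curve class $c = 2e_i - \sum_{j\neq i} e_{ij}$ sweeps out $E_i$ with $\mathbf{N}(c).c = E_i.c = -2$, but for each $E_{ij}\in\mathbf{N}(\mathcal{C})\setminus\{E_i\}$ one computes $E_{ij}.c = 1$, which is neither $0$ nor $-\mathbf{N}(c).c = 2$. (The paper states this explicitly at the start of its own proof: ``only $2e_i - \sum e_{ij}$ is not a unimodal curve class.'') Consequently, Lemma~\ref{propertiesNefMinimal}(3) does \emph{not} guarantee that $\mathbf{G}(I)$ is a single Hamiltonian cycle for every nef-minimal $I$, and your reduction of the problem to ``enumerating directed cycles in an auxiliary graph on $\mathcal{D}$'' collapses. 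Nef-minimal subsets containing one or more of these $-2$ curves can have more intricate graphs, and it is precisely these subsets that produce the third nef class $2l - e_{12} - e_{13} - e_{14} - e_{25} - e_{35} - e_{45}$.

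The paper's proof is organized around exactly this obstruction. Rather than a uniform cycle enumeration, it isolates seven specific $S_6$-orbits of small subsets (listed as subsets (1)--(7)) and eliminates each in turn; the hardest cases --- subsets (6) and (7) --- are those mixing a $-2$ curve with edge curves, and they require the full strength of Lemma~\ref{eliminate}(2)--(3) together with four hand-checked curve-class identities (the paper's ``relationships (1)--(4)''). Only after these eliminations does the residual search behave well enough to finish by a short Algorithm~\ref{negCurvesAlg} run. Your outline would be sound if you dropped the unimodality claim and instead treated the $-2$ curves as a separate, harder case from the outset; but as written the proposal does not anticipate this, and the sketch of the ``longer cycles'' step gives no indication of how the non-unimodal interactions would be controlled.
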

\noindent A geometric interpretation of these extreme nef curves appears in \cite[Section~5.2]{Hassett_2002}.  By Proposition \ref{nefCovers}, it is sufficient to check that $\mathbf{F}(c)\subseteq\mathfrak{E}$ for each curve $c$ listed above, where $\mathfrak{E}$ is the cone generated by $\mathcal{D}$.  Let $D\in \mathbf{F}(c)$.  By $S_6$ symmetry, we reduce to 3 cases.  

\textbf{Case 1:} Suppose $c= l-e_1$.  Then $D.(l-e_1 -e_{ij})\geq 0$ and $D.e_{ij}\geq 0$ imply $m_{ij} = 0$ for $i,j\neq 1$.  Furthermore, $D.(l-e_1 -e_i + e_{1i})\geq 0$ and $D.(e_i -e_{1i})\geq 0$ imply $m_i = m_{1i}$ for $i\neq 1$.  We want to show that $D$ is the pullback of an effective divisor class from $\overline{M}_{0,5}$.  To see this, note $D.e_i \geq 0$ and $D.(l-e_i -e_j)= D.(l-e_i -e_j +e_{ij}) \geq 0$ for $i,j\neq 1$.  Thus, by the above proof for $\overline{M}_{0,5}$, $D$ is the pullback of an effective divisor class.

\textbf{Case 2:} Suppose $c=(l-e_{12} -e_{34})$.  Since $l-e_{12} -e_{34}$ can be written as $(l -e_i -e_{jk}) +(e_i -e_{ih})$, we see that $m_1=m_2 = m_{12}$, $m_3=m_4 =m_{34}$, and $m_5 \leq \min(m_{12}, m_{34})$.  Since $D.(l-e_{12} -e_{34} -e_{i5} -e_{j5}) = 0$, we see $m_{i5} = 0$ for all $i$.  Moreover, $D.(2e_1 - e_{12} -e_{13} -e_{14} -e_{15}) \geq 0$ implies that $m_{12} \geq m_{13} + m_{14}$.  Similarly, $m_{12} \geq m_{23} + m_{24}$, and $m_{34} \geq \max(m_{13} +m_{23} , m_{14} + m_{24})$.  

\textbf{Claim:}  for any positive linear combination of boundary and Keel-Vermiere divisors $D'$, as long as the pairing of $D-D'$ with the following curves satisfies the prescribed (in)equalities listed below,
\begin{enumerate}
\item $(D-D').e_{i5} = 0$
\item $(D-D').(e_i -e_{ij}) = 0$ for $\{i,j\}\in \{\{1,2\},\{3,4\}\}$
\item $(D-D').(l-e_{12} -e_{34}) = 0$
\item $(D-D').(l-e_5-e_{ij}) \geq 0$ for $(i,j)\in \{(1,2),(3,4)\}$
\item $(D-D').e_5 \geq 0$
\item $(D-D').(e_{ij} -e_{ih} -e_{ik}) \geq 0$ for $\{\{i,j\},\{h,k\}\}=\{\{1,2\},\{3,4\}\}$
\item $(D-D').e_{ij}\geq 0$ for $i,j\in\{1,2,3,4\}$
\end{enumerate}
then $(D-D')\in\mathbf{F}(l-e_{12} -e_{34})$ (pairs nonnegatively with all curves in $\mathcal{C}$, and to $0$ with $l-e_{12} -e_{34}$).  Note that $D$ already satisfies the (in)equalities listed above.  

To see the above claim, group curves in $\mathcal{C}$ by the divisor they sweep out.  Clearly all negative curves sweeping out any $E_{i5}$ or $E_i$ would pair nonnegatively with $(D-D')$.  For $l-e_i -e_j +e_{ij}$ with $(i,j)\in \{1,2\}\times\{3,4\}$, note that $(D-D').(l-e_i -e_j)=0$, so we only need to check $(D-D').e_{ij} \geq 0$.  For $\{i,j\}\in \{\{1,2\},\{3,4\}\}$, $(D-D').(e_i -e_{ij}) = 0$ and $(D-D').(l-e_{ij}) \geq 0$ implies $(D-D').(l-e_i -e_j +e_{ij})\geq 0$.  Lastly, $(D-D')$ would similarly pair nonnegatively with any curve class in $\mathcal{C}$ that sweeps out a hyperplane $\Delta_{ijk}$ (this is left to the reader).

The following divisors $D'$ are linear combinations of generators of $\mathfrak{E}$ (elements of $\mathcal{D}$) that satisfy all the \textit{equalities} listed above, and therefore belong to $L\cap\mathfrak{E}$ (where $L$ is the linear subspace of $N^1(\overline{M}_{0,6})$ given by the vanishing of curves in $\mathbf{I}(\mathbf{F}(l-e_{12}-e_{34}))$):
\begin{itemize}
\item $E_5$
\item $E_{13}, E_{14}, E_{23}, E_{24}$
\item $KV_{13,24}, KV_{14,23}$
\item $(E_2 +\Delta_{234}), (E_1+\Delta_{134}), (E_4 +\Delta_{124}), (E_3 +\Delta_{123})$
\item $(\Delta_{125} +E_{15} +E_{25}), (\Delta_{345} +E_{35} +E_{45})$
\end{itemize}

By Algorithm \ref{negCurvesAlg}, we assume $D-\epsilon D' \not\in\mathbf{F}(l-e_{12}-e_{34})$ for all $D'$ listed above, and show $D\in \mathfrak{E}$.

Since $E_5$ only pairs positively with (4), we may assume (by symmetry) $D.(l-e_5 -e_{12})=0$, and so $d=m_5 +m_{12}$.  Similarly, since $E_{13}, E_{14}, E_{23},\text{ and } E_{24}$ only pair positively with (6), which consists of $e_{12} -e_{13} -e_{14}$, $e_{12} -e_{23} -e_{24}$, $e_{34} -e_{13} -e_{23}$, and $e_{34} -e_{14} -e_{24}$, in order for all four $D'=E_{ij}$ to satisfy $D- \epsilon D' \not\in \mathbf{F}(l-e_{12} -e_{34})$, we'd need either $D.(e_{12} -e_{i3} -e_{i4}) = 0$ for $i=1,2$ or $D.(e_{34} -e_{1i} -e_{2i}) = 0$ for $i=3,4$.  However, the former option implies the latter holds as well, so we may assume $m_{34}= m_{13} + m_{23} = m_{14} + m_{24}$.

If $D.e_5 = 0$, then $0=D.(l-e_{12})=D.(l-e_1)$, so we return to case 1.  Similarly, $m_{12}= 0$ or $m_{34}= 0$ returns to case 1.  If $d= m_{34} + m_{5}$, then $m_{34} = m_{12} = m_{5} = d/2$.  But then as $ 2m_{12} \geq (m_{13} + m_{14}) + (m_{23} + m_{24}) = 2m_{34} = 2m_{12}$, this implies $m_{12} = m_{13} + m_{14} = m_{23} + m_{24}$.  So, since $m_{13} + m_{14} = m_{13} + m_{23} = d/2$, we see $m_{14} = m_{23}$ and $m_{13} = m_{24}$.  Thus, $D$ is exactly $m_{13} KV_{14,23} + m_{14} KV_{13,24}$. 

Thus, we can assume $m_5, m_{12},m_{34} > 0$, and $d > m_{34} + m_5$.  However, with $D' =(\Delta_{125} +E_{15} +E_{25})$, this implies $D-\epsilon D' \in \mathbf{F}(l-e_{12} -e_{34})$ unless $m_{12} = m_{13} + m_{14}$, so we may assume this as well.  But then, if $m_{12} = m_{23} + m_{24}$, we'd acquire $m_{12} = m_{34}$, giving $d= m_5 + m_{12} = m_5 + m_{34}$, contrary to our assumptions.  Hence, we may assume $m_{12} > m_{23} + m_{24}$.

If either $m_{13}$ or $m_{14}$ is 0, then $m_{12} \leq m_{34}$, which we just covered.  If $m_{23} = m_{24} = 0$, then $m_5 = m_{34} = m_{13} = m_{14}$, and $D= m_{34}(E_1+\Delta_{134}) + m_{12}(\Delta_{125} +E_{15} +E_{25}) +(m_{12}-m_{34})E_5$.  Thus, we may assume $m_{13},m_{14} > 0$ and one of $m_{23},m_{24} > 0$.  This, however, implies that $D- \epsilon D' \in \mathbf{F}(l-e_{12} -e_{34})$ for $D' = (E_4 +\Delta_{124})\text{ or } (E_3 +\Delta_{123})$, a contradiction.  Thus $D\in \mathfrak{E}$.

\textbf{Case 3:} Suppose $c=(2l - e_{12} - e_{13} -e_{14} -e_{25} - e_{35} -e_{45})$.  Then, as 
$$(2l - e_{12} - e_{13} -e_{14} -e_{25} - e_{35} -e_{45}) = 2(l -e_1 -e_5 + e_{15}) + (2e_1 - \sum e_{1i}) + (2e_5 - \sum e_{i5}),$$
we must have $D.(l- e_1 -e_5 + e_{15}) = D.(2e_1 - \sum e_{1i}) = D.(2e_5 - \sum e_{i5}) = 0$. Similarly, as 
$$(2l - e_{12} - e_{13} -e_{14} -e_{25} - e_{35} -e_{45}) = (l - e_{1i} - e_{1j} - e_{ij} - e_{k5}) +  (l - e_{i5} - e_{j5} - e_{ij} - e_{1k}) + 2e_{ij}$$
for $\{ i,j,k\} = \{2,3,4\}$, we must have
\begin{itemize}
\item $m_{ij}= 0$ for $i,j\in\{2,3,4\}$
\item $d - m_{ih} - m_{jh} -m_{ij} - m_{kl} = 0$ for $\{i,j,k\} = \{2,3,4\}$ and $\{h,l\} = \{1,5\}$
\item $2m_i - \sum m_{ij} = 0$ for $i = 1,5$
\item $d - m_1 - m_5 + m_{15} = 0$
\end{itemize}
From these conditions, we obtain $d - m_{12} - m_{13} - m_{45} = 0 = d - m_{25} - m_{45} - m_{13}$, so $m_{12} = m_{25}$.  Similarly, $m_{1i} = m_{i5}$ for $i=3,4$.  This implies $m_1 = m_5$.  Thus, we have the following:
\begin{itemize}
\item $m_{1i} = m_{i5}$ for $i\in \{2,3,4\}$.
\item $m_1 = m_5$
\item $d= m_1 + m_5 -m_{15}$
\item $2m_1 = \sum m_{1i}$
\item $m_{ij}=0$ for $i,j\in \{2,3,4\}$.
\item $d= m_{12} + m_{13} + m_{14}$
\end{itemize}
\textbf{Claim:} as long as $D$ satisfies these equalities and the following inequalities, then $D\in\mathbf{F}(2l - e_{12} - e_{13} -e_{14} -e_{25} - e_{35} -e_{45})$:
\begin{itemize}
\item $d-m_i -m_j \geq 0$ for $i,j\in\{2,3,4\}$
\item $m_i \geq m_{1i}$ for $i\in \{2,3,4\}$
\item $d\geq m_1 + m_{i5}$ for $i\in\{2,3,4\}$
\item $m_{1i} \geq 0$
\item $2d\geq m_1 + m_5  + m_i$ for $i \in \{2,3,4\}$.
\item $d - m_1 -m_i + m_{1i} \geq 0$ for $i\in \{ 2,3,4\}$
\item $2d\geq m_2 + m_3 + m_4 + m_{15}$
\end{itemize}
The proof of this is omitted, as it is similar to the argument in case 2. The following divisors $D'$ all satisfy the equalities listed above, and therefore belong to $L\cap\mathfrak{E}$ (where $L$ is the linear subspace of $N^1(\overline{M}_{0,6})$ given by the vanishing of curves in $\mathbf{I}(\mathbf{F}(2l - e_{12} - e_{13} -e_{14} -e_{25} - e_{35} -e_{45}))$):
\begin{itemize}
\item $ E_2, E_{3}, E_{4},\Delta_{125}, \Delta_{135}, \Delta_{145}$
\item $ (E_{1} + E_5 +  2E_{15}), (\Delta_{123} +\Delta_{235} + 2E_{23}), (\Delta_{124} +\Delta_{245} + 2E_{24}), (\Delta_{134} +\Delta_{345} + 2E_{34})$
\item $ KV_{15,23}, KV_{15,24}, KV_{15,34}$
\end{itemize}

Note, if $D.e_{1i} = 0$ for $i\in \{2,3,4\}$, then $0= m_{1i} = m_{i5}$, and this reduces to case 2.  
If $2d = m_1 + m_5 + m_i$ for some $i\in \{2,3,4\}$, say $i=2$, then we have $d= m_1 + m_2 -m_{12}$ (this sums with $d\geq m_5 +m_{12}$ to the get indicated equailty).  Since we also have $d=m_{12} + m_{13} + m_{14} = m_{12} + m_{35} + m_{45}$, we see that $2d = m_1 + m_2 + m_{35} + m_{45}$.  This is case 2.  If $D.(l-e_1 -e_i + e_{1i}) = 0$, then $D.(l-e_5 -e_i + e_{i5}) = 0$ as well, and we see (say $i=2$) that $4d = 2m_2 + m_1 + m_5 + m_{13} + m_{14} + m_{35} + m_{45}$, which reduces to case 2.  Lastly, if $2d = m_2 + m_3 + m_4 + m_{15}$, then $3d = \sum m_i$, which is case 1.  Thus, we may suppose these inequalities are strict, and keep track of only the following inequalities:  
\begin{itemize}
\item $d -m_i -m_j \geq 0$ for $i,j\in\{2,3,4\}$
\item $m_i \geq m_{1i}$ for $i\in \{2,3,4\}$
\item $d\geq m_1 + m_{i5}$ for $i\in\{2,3,4\}$
\item $m_{15} \geq 0$
\end{itemize} 

We may assume, for the divisors $D'$ written above, that $D-\epsilon D'\not\in \mathbf{F}(2l - e_{12} - e_{13} -e_{14} -e_{25} - e_{35} -e_{45})$ for all $\epsilon > 0$.  For $D'$ as specified, that implies the following inequalities are equalities:
\begin{itemize}
\item For $D' = E_i$, $d-m_i - m_j = 0$ for some $j\in \{2,3,4\}\setminus\{i\}$.  
\item For $D' = \Delta_{15i}$, either $m_{15} = 0$ or $d=m_j + m_k$ with $\{i,j,k\} = \{2,3,4\}$.
\item For $D' =  (E_{1} + E_5 +  2E_{15})$, $d = m_1 + m_{i5}$ for some $i\in \{2,3,4\}$
\item For $D' =  (\Delta_{1ij} +\Delta_{ij5} + 2E_{ij})$, either $d = m_1 + m_{k5}$, $m_i = m_{1i}$, or $m_j = m_{1j}$
\item For $D' = KV_{15,ij}$, either $d=m_1 + m_{k5}$ or $m_k = m_{1k}$.
\end{itemize}

Considering $D'=E_2, E_3, E_4$, we may therefore assume that $d=m_2 + m_3 = m_2 + m_4$.  Since action by $S_6$ makes $d= m_3 + m_4$ equivalent to $m_{15} = 0$, by $D' = \Delta_{125}$ we may also assume that $d=m_3 + m_4$.  This implies that $m_2 = m_3 = m_4 = d/2$.  

By $D'=KV_{15,ij}$, for each $k\in \{2,3,4\}$, either $d=m_1 +m_{k5}$ or $m_k = m_{k5}$.  If $m_k = m_{k5}$ for two distinct $k$, say $k=2,3$, then $d/2 = m_2 = m_3 = m_{25} = m_{35} = m_{12} = m_{13}$.  But this puts us in case 2, as $d= m_{12} + m_{35}$.  So, we may suppose $d = m_1 + m_{25}$, $d= m_1 + m_{35}$, $m_2 > m_{12}$, and $m_3 > m_{13}$.  With $D'=(\Delta_{123} +\Delta_{235} + 2E_{23})$, these last two inequalities imply $d= m_1 + m_{45}$.  So, we may assume $d= m_1 + m_{k5}$ for all $k\in \{2,3,4\}$.

This, however, specifies our divisor class completely.  We see $m_{12} = m_{13} = m_{14} = m_{25} = m_{35} = m_{45}$, so that $6m_{12} = 2d$.  Thus, $m_{12} = d/3$, and $m_1 =m_5 = 2d/3$.  This gives $m_{15} = 4d/3 -d = d/3$.  Thus, $6|d$, and $D$ is precisely $\frac{d}{3}\Delta_{125} + \frac{d}{6}(KV_{15,34} + \Delta_{135} + \Delta_{145} + E_1 + E_5 + 2E_{15})$.
\end{proof}

\section{Proof of Theorem \ref{rational contractions}}\label{Section9}



\begin{proof}
We recall the statement of Lemma \ref{lastLem}: 
There are $3$ $S_6$-equivalence classes of nef curves that cover all nef-minimal subsets of $\mathcal{C}$ on $\overline{M}_{0,6}$.  They are
\begin{enumerate}
\item $l -e_1$ ($S_6$ equivalent to $3l - \sum e_i$)
\item $l-e_{12} -e_{34}$ ($S_6$ equivalent to $2l - e_1 -e_2 -e_{34} -e_{35}$)
\item $2l -e_{12} -e_{13} -e_{14} -e_{25} -e_{35} -e_{45}$ ($S_6$ equivalent to $3l - 2e_5 - \sum_{i,j\leq 4} e_{ij}$)
\end{enumerate}
Interpreted using results in Section \ref{Section MDS}, these are precisely the extreme rays of $\text{Nef}_1(X)$ whose associated special contractions are initial.  In fact, there are rational contractions listed in \cite[Section~5.2]{Hassett_2002} satisfying the hypotheses of Proposition \ref{extreme contraction} for each such curve.  To obtain a special contraction associated to each ray, merely compute classes of boundary divisors contracted to points.  These are boundary divisors with negative curves meeting flopping curves on $\overline{M}_{0,6}$ which are contracted by the rational contractions $\phi : \overline{M}_{0,6} \dashrightarrow S$ identified in \cite[Section~5.2]{Hassett_2002}.  To conclude our proof, it suffices to prove Lemma \ref{lastLem}.
\end{proof}

\begin{rem}
Let $\phi = \pi_{12} \times \pi_{34}$ be the product of iterated forgetful morphisms.  Boundary divisors contracted to points by $\phi : \overline{M}_{0,6} \rightarrow \overline{M}_{0,4} \times \overline{M}_{0,4}$ are of class $E_{13}$, $E_{14}$, $E_{23}$, $E_{24}$, and $E_5$.  Writing $\overline{M}_{0,4} \times \overline{M}_{0,4} \cong \mathbb{P}^1 \times \mathbb{P}^1$, these lie over points $\{(0,0), (0,1), (1,0), (1,1), (\infty, \infty)\}$.  These points are the center of the blow-up $Y \rightarrow \overline{M}_{0,4} \times \overline{M}_{0,4}$.

Similarly, suppose $\phi : \overline{M}_{0,6} \dashrightarrow \mathbb{P}^2$ is the map described in \ref{rational contractions}(3) with general fiber of class $2l -e_{12} -e_{13} -e_{14} -e_{25} -e_{35} -e_{45}$.  We note $\phi$ is defined away from three 1-dimensional boundary strata of class $e_i - e_{1i} -e_{i5}$ for $i \in \{2,3,4\}$. the blow-up $Y \rightarrow \mathbb{P}^2$ has as center the 6 intersection points of four lines.  These four lines are the unique lines dominated by reducible divisors $D \subset \overline{M}_{0,6}$.  Fibers over their intersection points contain one of the strict transforms of boundary divisors $E_2$, $E_3$, $E_4$, $\Delta_{125}$, $\Delta_{135}$, and $\Delta_{145}$.  Our proof of Theorem \ref{coneThm} lists these divisors, along the reducible divisors $D$ and a few Keel-Vermiere divisors, as classes of divisors contracted by $\phi$.
\end{rem}

\subsection{Proof of Lemma \ref{lastLem}}\label{lastlemSection}
\begin{proof}
To find the minimal subsets of $\mathcal{C}$ that generate nef curves, it is important to use the action of $S_6$ on $\mathcal{C}$.  The curves $e_{ij}$ and $l -e_i -e_j +e_{ij}$ are $S_6$ equivalent.  Similarly, $\Delta_{ijk}$ is $S_6$ equivalent to $E_i$, and $2l-e_i -e_j -e_k -e_{hl}$ is $S_6$ equivalent to $l-e_i -e_{jk}$ and $e_i -e_{ij}$.  Lastly, $(2e_i -\sum e_{ij})$ is $S_6$ equivalent to $l-e_{12} -e_{13} -e_{23} -e_{45}$. Thus, there are exactly 3 $S_6$ orbits of curves in $\mathcal{C}$.  Of them, only $2e_i - \sum e_{ij}$ is not a \textit{unimodal} curve class (it pairs to $-2$ with $E_i$ but $1$ with $E_{ij}$).  We will refer to curve classes in this orbit as ``the $-2$ curve (on $D$)," curves sweeping out some $E_{ij}$ as ``edge curves," and curves equivalent to $e_1-e_{12}$ as ``$-1$ plane curves."  We refer to the 3 $S_6$-equivalence classes of curves in Lemma \ref{lastLem} as ``curve \ref{lastLem}(i)." We note that $\mathbf{P}(2e_1 - \sum e_{1i}) = \{E_{12}, E_{13}, E_{14}, E_{15}\}$, $\mathbf{P}(e_1 - e_{12}) = \{ E_{12}, \Delta_{134}, \Delta_{135}, \Delta_{145}\}$, $\mathbf{P}(e_{12}) = \{ \Delta_{12i}| i =3,4,5\}$.

In the language of Lemma \ref{eliminate}, we will show that we can eliminate the following subsets of $\mathcal{C}$:
\begin{enumerate}
\item $\{ e_i - e_{ij}, (l-e_i -e_j + e_{ij})\}$ ($S_6$ equivalent to $\{ (2l -e_1 -e_2 -e_3 -e_{45}), (l-e_4 -e_5 + e_{45})\}$ and $\{(l-e_1 -e_{23}), e_{23}\}$)
\item $\{(2e_i -\sum e_{ij}), (l-e_i -e_h +e_{ih}), (l-e_i -e_k +e_{ik})\}$ ($S_6$ equivalent to $\{ (l-e_{12} -e_{13} -e_{23} -e_{45}), e_{12}, e_{13}\}$ and $\{ (l-e_{12} -e_{13} -e_{23} -e_{45}), e_{12}, (l-e_4 -e_5 +e_{45})\}$)
\item $\{(l-e_i -e_{jk}), (e_i -e_{ih})\}$ ($S_6$ equivalent to $\{(2l -e_1 -e_2 -e_4 -e_{35}), (e_4 -e_{34})\}$ and $\{(l-e_1 -e_{34}), (l-e_2 -e_{35})\}$)
\item $\{(2e_i -\sum e_{ij}), (2e_k -\sum e_{jk}), (l-e_i -e_k +e_{ik})\}$ ($S_6$ equivalent to $\{ (l-e_{12} -e_{13} -e_{23} -e_{45}), (l-e_{12}-e_{14} -e_{24} -e{35}), e_{12}\}$ and $\{ (l-e_{12} -e_{13} -e_{23} -e_{45}), (2e_4 -\sum e_{i4}), (l-e_4 -e_5 +e_{45})\}$)
\item $\{ e_i - e_{ij}, e_{ij}\}$ ($S_6$ equivalent to $\{ (2l -e_1 -e_2 -e_3 -e_{45}), e_{45}\}$ and $\{(l-e_1 -e_{23}), (l-e_2 -e_3 +e_{23})\}$)
\item $\{ (2e_i -\sum e_{ij}), e_{ik}\}$ ($S_6$ equivalent to $\{ (l -e_{12} -e_{13} -e_{23} -e_{45}), l-e_1 -e_2 + e_{12}\}$ and $\{(l-e_{12} -e_{13} -e_{23} -e_{45}), e_{45}\}$)
\item $\{ (e_i -e_{ih}), (l-e_j -e_{ik})\}$ ($S_6$ equivalent to $\{ (l -e_1 -e_{23}), (l-e_2 -e_{45})\}$, $\{ (l-e_1 -e_{23}), (e_1 -e_{12})\}$, and $\{ (2l -e_1 -e_2 -e_3 -e_{45}), (e_1 -e_{12})\}$)
\end{enumerate}
We will refer to these as subsets 1-7.
After showing we can eliminate these subsets, the cycles that may appear in the graph $\mathbf{G}(I)$ of some nef-minimal, uneliminated $I\subset\mathcal{C}$ are quite limited, and a simple check finishes the proof.  Throughout the proof, we will use the following four relationships repeatedly (up to $S_6$ equivalence):
\begin{enumerate}
\item $(l-e_i -e_j +e_{ij}) + (e_i - e_{ik}) = (l-e_j -e_{ik}) + e_{ij}$ ($S_6$ equivalent to $e_{12} + (2l -e_1 -e_2 -e_3 -e_{45}) = (l -e_1 -e_2 +e_{12}) + (l-e_3 -e_{45})$)
\item $ (2e_j -\sum_{\alpha} e_{j\alpha}) + (l -e_i -e_j +e_{ij}) + (e_i -e_{ik}) = (l-e_{jh} -e_{jm} -e_{hm} -e_{ik}) + e_{hm} + (e_j -e_{ij})$
\item  $ (2e_j -\sum_{\alpha} e_{j\alpha}) + (l -e_i -e_j +e_{ij}) + e_{jk} = (e_j -e_{jm}) + (l-e_i -e_{jh})$ 
\item  $ (2e_j -\sum_{\alpha} e_{j\alpha}) + e_{ij} + e_{jk} = (e_j -e_{jm}) + (e_j -e_{jh})$
\end{enumerate}
We will refer to these as relationships 1-4.

The elimination of subsets (1)-(4) is trivial: 
they generate curves \ref{lastLem}(1), \ref{lastLem}(2), \ref{lastLem}(2), and \ref{lastLem}(3) respectively.  Eliminating subset (5) is also straightforward.  Let $I\supset\{ (e_i -e_{ij}), e_{ij}\}$ be a nef-minimal subset of $\mathcal{C}$.  Then, by Lemma \ref{propertiesNefMinimal}, $I$ must contain a curve $c$ with $\mathbf{N}(c).e_{ij} > 0$.  The only such $\mathbf{N}(c)$ are $\Delta_{ijk}$ for various $k$.  
\begin{figure}[ht]
\includegraphics[width=0.375\linewidth]{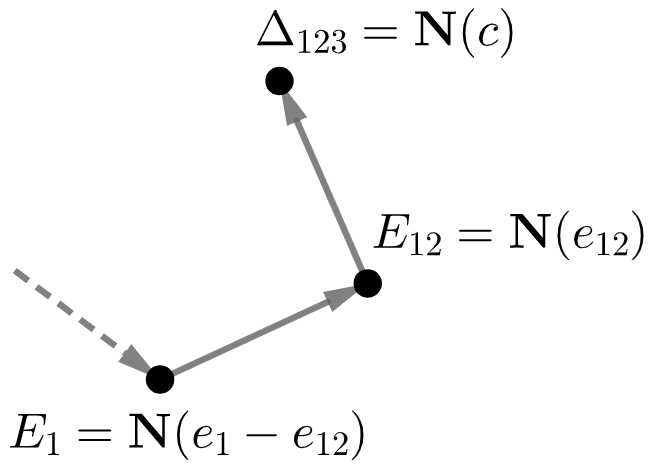}
\caption{Eliminating subset (5).}
\end{figure}
We may assume $(i,j,k)=(1,2,3)$, so that $c\in\{(l-e_1 -e_{23}),(l-e_{2} -e_{13}), (l-e_3 -e_{12}), (2l -e_1 -e_2 -e_3 -e_{45}), (l-e_{12} -e_{13} -e_{23} -e_{45})\}$.  Note that $e_1 -e_{12} + e_{12} = e_1 -e_{1\alpha} + e_{1\alpha}$ for all $\alpha$, so $(e_1 -e_{1\alpha}),e_{1\alpha} \in \mathbf{I}(\mathbf{F}(I))$ for all $\alpha$.  Thus, if $c=(l-e_1 -e_{23})$, $c+ (e_1 -e_{14})$ is curve \ref{lastLem}(2).  If $c=(l-e_2 -e_{13})$, $c+ e_{13}$ is curve \ref{lastLem}(1).  If $c=(l-e_3 -e_{12})$, $c+ e_{12}$ is also curve \ref{lastLem}(1).  If $c=(2l -e_1 -e_2 -e_3 -e_{45})$, $c + (e_1 -e_{14})$ is curve \ref{lastLem}(2).  Lastly, if $c=(l -e_{12} -e_{13} -e_{23} -e_{45})$, then $c + e_{12} + e_{13}$ is curve \ref{lastLem}(2).  Therefore, $\mathbf{F}(I)\subseteq\mathbf{F}(c)$ for $c$ a nef curve in $S_6$-equivalence class \ref{lastLem}(1) or \ref{lastLem}(2).  This eliminates $\{ (e_i -e_{ij}), e_{ij}\}$ by definition.

Eliminating subset (6) is slightly trickier.  To do so, we must first eliminate two other, larger subsets containing it:
\begin{itemize}
\item $\{(2e_i -\sum e_{ij}), e_{ik}, (l-e_i -e_h +e_{ih})\}$ ($S_6$ equivalent to $\{ (l-e_{12} -e_{13} -e_{23} -e_{45}), (l -e_1 -e_2 + e_{12}), e_{13}\}$, $\{ (l-e_{12} -e_{13} -e_{23} -e_{45}), e_{12}, e_{45}\}$, and $\{ (l-e_{12} -e_{13} -e_{23} -e_{45}), (l -e_1 -e_2 + e_{12}), (l-e_4 -e_5 + e_{45})\}$)
\item $\{(2e_i -\sum e_{ij}), e_{ik}, e_{ih}\}$ ($S_6$ equivalent to $\{ (l-e_{12} -e_{13} -e_{23} -e_{45}), (l -e_1 -e_2 + e_{12}), (l -e_1 -e_3 +e_{13})\}$ and $\{ (l-e_{12} -e_{13} -e_{23} -e_{45}), (l-e_1 -e_2 + e_{12}), e_{45}\}$)
\end{itemize}
The elimination of these subsets comes from Lemma \ref{eliminate}(2) applied to relationships (3) and (4) above, respectively.\footnote{This is the only application of Lemma \ref{eliminate}(2) in our proof, and every edge it adds to the graph $G$ in Lemma \ref{eliminate}(2) emanates from a $-2$ curve and ends at a $-1$ plane curve.  Later, a solitary application of Lemma \ref{eliminate}(3) will add edges to $G$ that start at $-2$ curves and stop at edge curves.  Both applications of \ref{eliminate}(2) or \ref{eliminate}(3) decrease the number of $-2$ curves present in a nef-generating subset of $\mathcal{C}$;  neither application increases the number of subsets of type (6) present in a nef-generating subset of $\mathcal{C}$.}  
To eliminate subset (6), we show that given nef-minimal $I\subset\mathcal{C}$ containing a subset equivalent to (6), either $\mathbf{F}(I)\subseteq\mathbf{F}(c)$ for $c$ a nef curve in Lemma \ref{lastLem}, or we may produce a nef-generating $I'\subset\mathcal{C}$ with $\mathbf{F}(I)\subseteq\mathbf{F}(I')$ such that $I'$ has either fewer $-2$ curves than $I$ or the same such number but fewer subsets equivalent to (6).  By descending double induction, this eliminates subset (6).

Suppose $I\subset\mathcal{C}$ is nef-minimal, and $\{(2e_1 -\sum e_{1\alpha}),e_{12}\}\subset I$.  Lemma \ref{propertiesNefMinimal} implies $I$ contains a curve $c$ with $\mathbf{N}(c).e_{12} > 0$.  The only such $\mathbf{N}(c)$ are $\Delta_{12i}$.  We first suppose all such $c\in I$ are not $-2$ curves.  Picking one $c$, we may assume $\mathbf{N}(c)=\Delta_{123}$ and $c\in\{ (l-e_3 -e_{12}), (l-e_1 -e_{23}), (l-e_2 -e_{13}), (2l -e_1 -e_2 -e_3 -e_{45})\}$.  If $c=(l-e_3 -e_{12})$, then $c+e_{12}$ is curve \ref{lastLem}(1).  Otherwise, by relationship (1), we may ``replace" $e_{12}$ with $(l-e_1 -e_2 +e_{12})$ and another curve, by arguing as in the proof of Lemma \ref{eliminate}(2).  For example, if $c=(l -e_1 -e_{23})$, then $e_{12} + (l-e_1 -e_{23}) = (l -e_1 -e_2 +e_{12}) + (e_2 -e_{23})$ implies $I' = \{(l-e_1 -e_2 +e_{12}), (e_2 -e_{23})\}\cup I\setminus\{e_{12}\}$ also generates a nef curve.\footnote{It is important that we avoid applying Lemma \ref{eliminate}(2) directly because this would not work!  In the graph $G$, we would add directed edges from $e_{12}$ to $l-e_1 -e_2 + e_{12}$, and vice versa.  Instead, we argue that we may eliminate $I$ using a subset $I'$ containing fewer subsets of type (6).  This, of course, requires that by ``replacing" $e_{12}$ with $\{(l-e_1 -e_2 +e_{12}), (e_2 -e_{23})\}$, we do not create additional subsets of type (6).  The only such subsets we might create involve $(l-e_1 -e_2 +e_{12})$ and a $-2$ curve on some $\Delta_{12i}$.  These are exactly the curves we excluded at the paragraph's beginning.}  In conclusion, we may assume instead that $I$ contains a $-2$ curve $c$ with $\mathbf{N}(c).e_{12} > 0$.  By symmetry, we may suppose $c=(l-e_{12} -e_{13} -e_{23} -e_{45})$, so that $\{(2e_1 -\sum e_{1\alpha}),e_{12}, (l-e_{12} -e_{13} -e_{23} -e_{45})\}\subset I$. 

\begin{figure}[ht]
\includegraphics[width=0.65\linewidth]{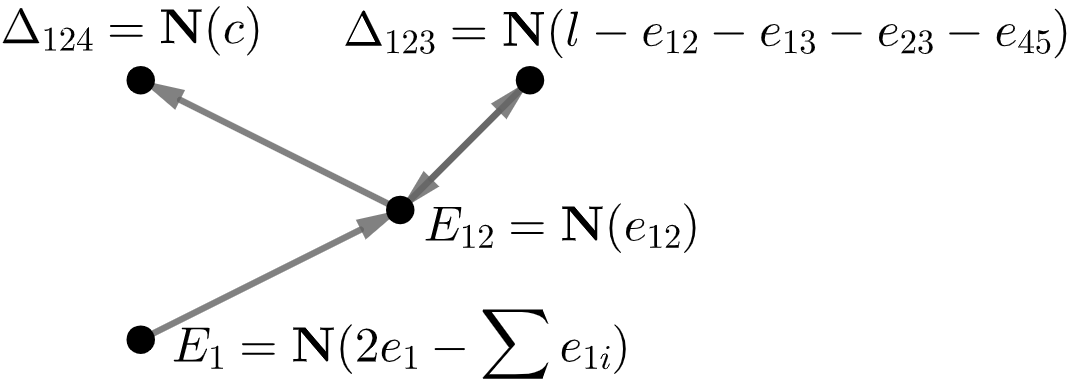}
\caption{Eliminating subset (6).}
\end{figure}
By Lemma \ref{propertiesNefMinimal}(2), we know that $I$ must contain another curve $c$ with $\mathbf{N}(c).e_{12} > 0$ or $\mathbf{N}(c).(l-e_{12} -e_{13} -e_{23} -e_{45}) > 0$.  However, the latter option implies $\mathbf{N}(c)\in\{E_{12}, E_{13}, E_{23}, E_{45}\}$, in which case $I\supset S$ for $S$ equivalent to subsets listed first or second above.  Thus, we must have $\mathbf{N}(c).e_{12} > 0$.  By symmetry, we may assume $\mathbf{N}(c)=\Delta_{124}$, and that $c\in\{(l-e_4 -e_{12}), (l-e_2 -e_{14}), (l -e_1 -e_{24}), (l -e_{12} -e_{14} -e_{24} -e_{35})\}$. If $c$ is the first or last curve class in this list, then $I$ generates curve \ref{lastLem}(1) or \ref{lastLem}(3).  If $c=(l-e_2-e_{14})$, then since 
$$(l-e_2 -e_{14}) + e_{12} + (l-e_{12} -e_{13} -e_{23} -e_{45}) = (l-e_{14} -e_{15} -e_{45} -e_{23}) +e_{15} + (l-e_2 -e_{13}),$$
we see that $e_{15} \in\mathbf{I}(\mathbf{F}(I))$.  By Lemma \ref{eliminate}(3), applied to $D= E_{15}$, we see that we may eliminate this option.\footnote{This is the solitary application of Lemma \ref{eliminate}(3).  It adds an edge to $G$ in Lemma \ref{eliminate}(2) starting at $(2e_1 - \sum e_{1\alpha})$ and ending at either $e_{12}$ or $e_{15}$.}  If $c=l-e_1 -e_{24}$, then since $(l-e_1 -e_{24}) +e_{12} = (l-e_1 -e_2 +e_{12}) + (e_2 -e_{24})$, we see $(l-e_1 -e_2 +e_{12}), (e_2 -e_{24})\in\mathbf{I}(\mathbf{F}(I))$.  Similarly, since $(l-e_1 -e_2 +e_{12}) + (e_2 -e_{24}) +(2e_1 -\sum e_{1i}) = (l-e_{13} -e_{15} -e_{35} -e_{24}) + e_{35} + (e_1 -e_{14})$, and
$ (l-e_1 -e_{24}) + (e_1 -e_{14}) = (l-e_{14} -e_{24} -e_{12} -e_{35}) +e_{12} +e_{35},$
we see $\{e_{12}, (l-e_{12} -e_{13} -e_{23} -e_{45}), (l-e_{12} -e_{14} -e_{24} -e_{35})\}\subseteq \mathbf{I}(\mathbf{F}(I))$.  Therefore $\mathbf{I}(\mathbf{F}(I))$ generates curve \ref{lastLem}(3).  This proves we can eliminate $\{ (2e_i -\sum e_{ij}), e_{ik}\}$.  

As a consequence of eliminating (6), note that if $I\subset \mathcal{C}$ is any uneliminated, nef-minimal subset that contains a $-2$ curve $c$, and $c' \in \mathbf{I}(\mathbf{F}(I))$ is an edge curve with $c.\mathbf{N}(c') = c' . \mathbf{N}(c) = 1$, then $c' \in I$.  For instance, let $c = (2e_1 - \sum e_{1\alpha}), c' = (l -e_1 -e_2 + e_{12})$, and suppose $c'\not\in I$. By Lemma \ref{propertiesNefMinimal}(2), $I$ must contain a curve $c''\neq c'$ with $\mathbf{N}(c'').c > 0$.  The only such $c''$ are edge curves, and if $\mathbf{N}(c).c'' = 0$ (for example $c'' = e_{12}$), then $\{c,c''\}$ is a subset of type (6).  Thus, up to symmetry $c'' = l - e_1 -e_3 -e_{13}$, and $c + c' + c''$ is curve \ref{lastLem}(2).  Thus $I$ would be eliminated.

Next, we eliminate $S=\{ (e_i -e_{ih}), (l-e_j -e_{ik})\}$.  
Suppose $I\subseteq \mathcal{C}$ is nef-minimal, and that $\{(e_1 -e_{12}),(l-e_3 -e_{14})\}\subset I$.  Note that by relationship (3), $\{(l-e_3 -e_{12}), (e_1 -e_{14}), (2e_1 -\sum_{\alpha} e_{1\alpha}), (l -e_1 -e_3 + e_{13}), e_{15}\}\subset\mathbf{I}(\mathbf{F}(I))$.  Lemma \ref{propertiesNefMinimal} implies $I$ contains a curve $c$ with $\mathbf{N}(c).(l-e_3 -e_{14}) > 0$.  By symmetry, we may assume $\mathbf{N}(c) = \Delta_{125}\text{ or } E_3$.  We address these in separate cases:
\begin{figure}[ht]
\includegraphics[width=0.5\linewidth]{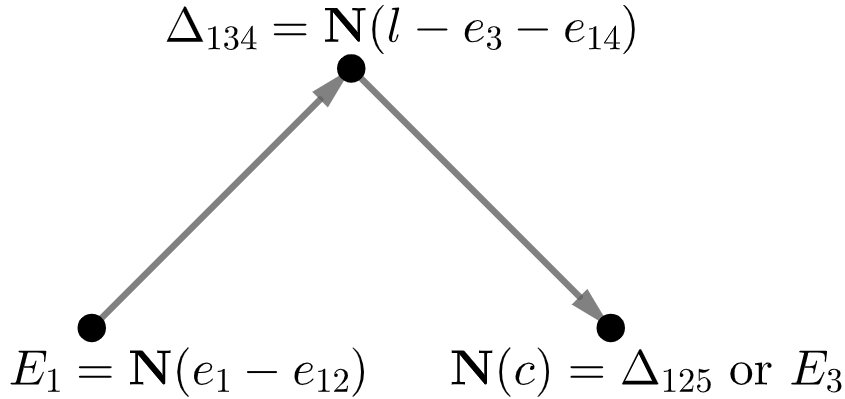}
\caption{Eliminating subset (7).}
\end{figure}

\textbf{Case 1:} Suppose $\mathbf{N}(c) = \Delta_{125}$.  Then $c\in \{ (l -e_1 -e_{25}), (l- e_2 -e_{15}), (l-e_5 -e_{12}), (2l -e_1 -e_2 -e_5 -e_{34}), (l-e_{12} -e_{15} -e_{25} -e_{34})\}$.  If $c=l -e_1 -e_{25}$, then since $(e_1 -e_{14})\in \mathbf{I}(\mathbf{F}(I))$, we see $I$ is covered by $l-e_{14} -e_{25}$.  A similar argument shows $c= (2l -e_1 -e_2 -e_5 -e_{34})$ is covered by $2l -e_2 -e_5 -e_{14} -e_{34}$.  Similarly, $e_{15}\in \mathbf{I}(\mathbf{F}(I))$ implies $c= l-e_2 -e_{15}$ is covered by $l-e_2$.  If $c=l-e_5 -e_{12}$, then $c+(l-e_3 -e_{14})= 2l -e_3 -e_5 -e_{12} -e_{14}$ is curve \ref{lastLem}(2).  Lastly, if $c= (l-e_{12} -e_{15} -e_{25} -e_{34})$, then we must also have $e_{15}\in I$, as well as another curve $c'\in I$ with $\mathbf{N}(c').e_{15} >0$.  This limits $\mathbf{N}(c')$ to either $\Delta_{135}$ or $\Delta_{145}$.  $c'$ cannot be the $-2$ curve on either, as this would generate curve \ref{lastLem}(2).  There are 8 other possibilities for $c'$.  They are all eliminated in exactly the same way options for $c$ were, except for $c' = l -e_5 -e_{13}$, which is handled by noting $c' + (l -e_1 -e_3 +e_{13}) + (e_1 -e_{12})= 2l -e_3 -e_5 -e_{12}$ is covered by curve \ref{lastLem}(2).

\textbf{Case 2:} Suppose $\mathbf{N}(c) = E_3$.  Then $c= e_3 -e_{3i}$ or $c= 2e_3 -\sum e_{3\alpha}$.  If $c= e_3 -e_{3i}$, then $i\neq 1,4,5$ may be seen by looking at $(l-e_1 -e_3 + e_{13})$ and $(l-e_3 -e_{12})$.  By looking at $l-e_3 -e_{14}$, we also eliminate $i=2$.  If $c= 2e_3 -\sum e_{3\alpha}$, then $c$, $(2e_1 -\sum e_{1\alpha})$, and $l-e_1 -e_3 + e_{13}$ generate curve \ref{lastLem}(3).  


This eliminates subset 7.  To conclude the proof, we apply Algorithm \ref{negCurvesAlg} to show the nef curves in Lemma \ref{lastLem} cover all nef-minimal subsets of $\mathcal{C}$.  We begin with a one element subsets $I = \{ c\} \subset \mathcal{C}$, and extend $I$ (or its graph $\mathbf{G}(I)$) until $I$ is eliminated or $\mathbf{I}(\mathbf{F}(I))$ is nef-generating.  Figure \ref{Final Image} displays a visual for this process.  As mentioned at the beginning of Section \ref{lastlemSection}, there are only 3 choices of $c$ up to symmetry: $(2e_1 -\sum e_{1i})$, $(e_1 -e_{12})$, and $e_{12}$.  To extend $\mathbf{G}(I)$, we must add $c'\in \mathcal{C}$ to $I$ such that $c.\mathbf{N}(c') > 0$.  

Suppose first that $c = (e_1 - e_{12})$, which gives $\mathbf{N}(c')=E_{12}$ or $\mathbf{N}(c') = \Delta_{1ij}$ for some $i,j \neq 2$.  If $\mathbf{N}(c')=E_{12}$, then $\{c,c'\}$ is either subset (1) or subset (5), which is eliminated.  Otherwise, $\mathbf{N}(c') = \Delta_{1ij}$, and $c'$ cannot be a $-1$ plane curve, as then $\{c,c'\}$ would be either subset (3) or (7), which is eliminated.  Thus, for any $-1$ plane curve $c$, every directed edge in $\mathbf{G}(I)$ emanating from the vertex labeled $\mathbf{N}(c)$ must end at a vertex $\mathbf{N}(c')$ for $c'$ a $-2$ curve.  

Suppose instead that $c=e_{12}$.  Deviating slightly from our standard argument, Lemma \ref{propertiesNefMinimal}(2) shows any nef minimal $I\supset\{e_{12}\}$ must contain $c_0$ with $c_0 . \mathbf{N}(e_{12}) >0$.  The only such $c_0$ satisfy $\mathbf{N}(c_0) = E_1,E_2,\Delta_{12i}$, or $\Delta_{345}$.  By the preceding paragraph, $c_0$ cannot be a $-1$ plane curve, and so must be a $-2$ curve.  If $\mathbf{N}(c_0) = E_1, E_2$, or $\Delta_{345}$, then $\{c_0,c\}$ is subset (6), and $I$ is eliminated.  Otherwise, $c.\mathbf{N}(c_0) = c_0 . \mathbf{N}(c) = 1$.  Following Algorithm \ref{negCurvesAlg}, we extend $I \{c,c_0\}$ by $c'$ with $\mathbf{N}(c').c >0$ or $\mathbf{N}(c').c_0 >0$.  If $\mathbf{N}(c').c_0 > 0$, $I$ contains either subset (2) or (6) and is eliminated.  If $\mathbf{N}(c').c >0$ and $c'$ is a $-2$ curve, then $I$ contains subset (4) and is eliminated.  So, we may assume $\mathbf{N}(c').c >0$, i.e. $\mathbf{N}(c') = \Delta_{12i}$, and that $c'$ is a $-1$ plane curve.  We note that any uncovered nef-minimal $I\subset\mathcal{C}$ must contain a $-2$ curve, and begin with this choice for $c$.

Suppose $I$ contains $2e_1 -\sum e_{1i}$.  By the preceding paragraphs, up to symmetry we may suppose it also contains $(l-e_1 -e_2 + e_{12})$, $(e_2 -e_{23})$, and the -2 curve on either $\Delta_{124}$ or $\Delta_{245}$.  The $-2$ curve on $\Delta_{124}$ pairs positively with $E_{12}$, while $\Delta_{124} .(l-e_1 -e_2 +e_{12}) = 0$, which is subset (6) (eliminated).  So, we can also suppose $I$ contains $l-e_{24} -e_{25} -e_{45} -e_{13}$.  Note that $e_{45}$ and $(l-e_1 -e_{23})$ vanish on $\mathbf{F}(I)$, as  
\begin{align*}
    (2e_1 +\sum e_{1i}) + (l-e_1 -e_2 +e_{12}) + (e_2 -e_{23}) &= (l-e_{14} -e_{15} -e_{45} -e_{23}) + e_{45} + (e_1 -e_{13}),\\
 \text{and } (l-e_1 -e_2 +e_{12}) + (e_2 -e_{23}) &= (l-e_1 -e_{23}) + e_{12}.
\end{align*}
So we must have $e_{45}\in I$ as well.  $I$ must contain another curve $c$ with $\mathbf{N}(c). e_{45} > 0$, which limits $\mathbf{N}(c)$ to $\Delta_{345}$ or $\Delta_{145}$.  Since $c$ must be a $-1$ plane curve, up to symmetry, if $\mathbf{N}(c) = \Delta_{345}$, $c$ would be $(l-e_4 -e_{35})$ or $(2l -e_3 -e_4 -e_5 -e_{12})$.  The first option generates curve \ref{lastLem}(2) with $(l-e_1 -e_{23})$, and the second generates curve \ref{lastLem}(1) with $(l-e_1 -e_2 + e_{12})$.  So, we may assume $\mathbf{N}(c) =\Delta_{145}$.  
\begin{figure}[ht]
\includegraphics[width=0.9\linewidth]{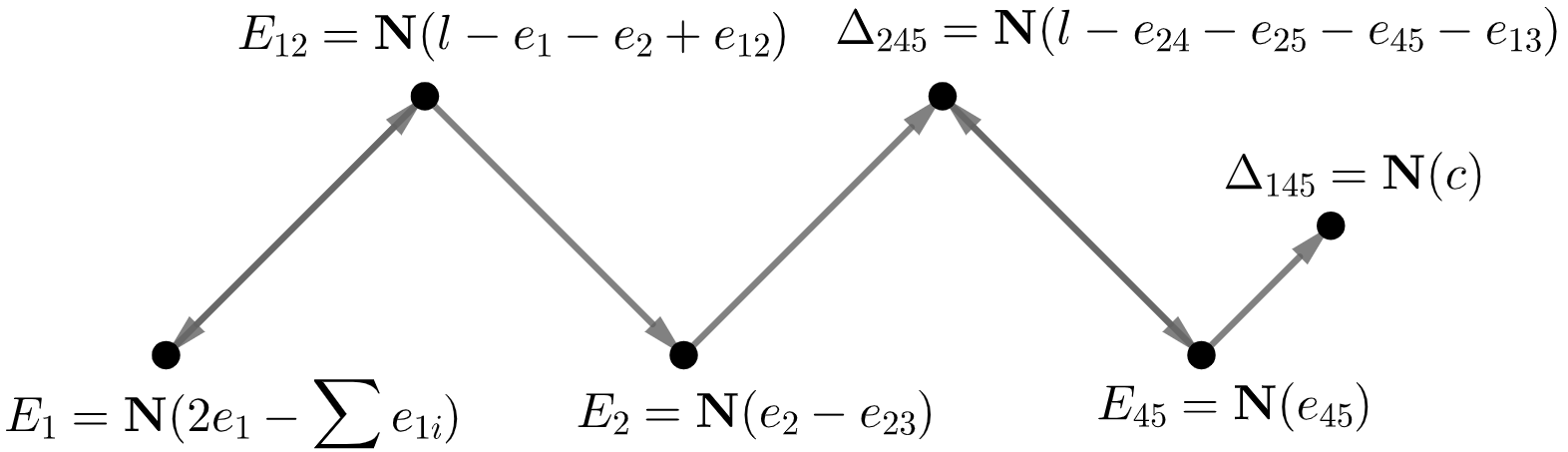}
\caption{Applying Algorithm \ref{negCurvesAlg}.}
\label{Final Image}
\end{figure}
Up to symmetry, there are 2 choices for $c$: $(l-e_5 -e_{14})$ and $(2l -e_1 -e_4 -e_5 -e_{23})$.  The second curve generates curve \ref{lastLem}(2) along with $e_1 -e_{13}$.  But, so too do we have
$$(l-e_5 -e_{14}) + (l-e_{24} -e_{25} -e_{45} -e_{13}) +e_{45} = (l-e_{13} -e_{14} -e_{34} -e_{25}) + e_{34} + (l-e_5 -e_{24}).$$
Along with $(l-e_1 -e_{23})$, $(l-e_5 -e_{24})$ generates curve \ref{lastLem}(2).  So, curves \ref{lastLem}(1)-(3) cover all nef-minimal $I$.
\end{proof}


\printbibliography

\end{document}